\documentclass[11pt,reqno]{amsart}

\usepackage{amssymb,dirtytalk,comment,tikz-cd,tikz,color,cite,enumerate,setspace,longtable}
\usepackage[unicode=true]
 {hyperref}
\hypersetup{
colorlinks=true,
urlcolor=black,
citecolor=blue,
linkcolor=blue,
}
\usepackage{xltabular}
\usepackage{pdflscape}

\newtheorem{theorem}{Theorem}[section]
\newtheorem{lem}[theorem]{Lemma}
\newtheorem{prop}[theorem]{Proposition}
\newtheorem{cor}[theorem]{Corollary}

\theoremstyle{definition}
\newtheorem{definition}[theorem]{Definition}
\newtheorem{example}[theorem]{Example}

\newtheorem{assumption}[theorem]{Hypothesis}
\theoremstyle{remark}

\numberwithin{equation}{section}

\newcolumntype{C}[1]{>{\centering\arraybackslash}p{#1}}

\newcommand{\Soc}{\mathrm{Soc}}
\newcommand{\Pb}{\mathrm{Pb}}
\newcommand{\Fix}{\mathrm{Fix}}
\newcommand{\Ann}{\mathrm{Ann}}

\newcolumntype{L}[1]{>{\raggedright\arraybackslash}p{#1}}
\newcolumntype{C}[1]{>{\centering\arraybackslash}p{#1}}
\newcolumntype{R}[1]{>{\raggedleft\arraybackslash}p{#1}}

\allowdisplaybreaks
\begin{document}

\title[A probabilistic approach to the YBE and skew braces]{A probabilistic approach to the\\Yang--Baxter equation and skew braces}

\author{Maria Ferrara}
\address{Dipartimento di Ingegneria, Facoltà di Ingegneria e Informatica, Università Pegaso, Napoli, Italy}
\email{maria.ferrara1@unipegaso.it}

\author{Marco Trombetti}
\address{Dipartimento di Matematica e Applicazioni ``Renato Caccioppoli'', Università di Napoli Federico II, Complesso Universitario Monte S. Angelo, Via Cintia, Napoli, Italy}
\email{marco.trombetti@unina.it}

\author{Cindy (Sin Yi) Tsang}
\address{Department of Mathematics, Ochanomizu University, 2-1-1 Otsuka, Bunkyo-ku, Tokyo, Japan}
\email{tsang.sin.yi@ocha.ac.jp}
\urladdr{http://sites.google.com/site/cindysinyitsang/}

\subjclass[2020]{Primary 16T25\\\mbox{\hspace{5.835cm} }Secondary 20P05\, 81R50\, 20N99}

\keywords{Yang--Baxter equation; set-theoretic solution; skew brace; \\indecomposable solution; commuting probability; probabilistic invariant}

\begin{abstract}
We investigate finite non-degenerate set-theoretic solutions to the Yang--Baxter equation and skew braces using a probabilistic approach. We introduce four probabilities that measure how far a solution is from being a flip, but in different ways. Our main results state that for solutions arising from skew braces, these probabilities exhibit a rigid behaviour --- apart from a finite list of exceptional values (which we show to occur by means of explicit examples), they admit an upper bound that is slightly above $\frac{1}{2}$. In the skew brace setting, these probabilities measure how far the underlying skew brace is from being a trivial brace (in two different ways: one via the annihilator and the other via the socle), a trivial skew brace, and an almost trivial skew brace. We also introduce a probability that is related to the indecomposable components of a solution, and a probability that measures how close an arbitrary bijective non-degenerate map is to being a solution. In contrast to our main results, these two probabilities do not exhibit a discrete behaviour near $1$ --- they can be made arbitrarily close to $1$. 
\end{abstract}

\maketitle

\section{Introduction}

The aim of this paper is to study solutions to the Yang--Baxter equation from a probabilistic perspective.  Recall that a {\it set-theoretic solution} to the Yang--Baxter equation  is a pair $(Z,r)$, where $Z$ is a non-empty set and
\[r: (x,y)\in Z\times Z\longmapsto (\lambda_x(y),\rho_y(x))\in Z\times Z\]
is a bijective map satisfying the consistency equation
\[ r_{12}r_{23}r_{12} = r_{23}r_{12}r_{23}.\]
Here $r_{12} = r \times \mathrm{id}_Z$ and $r_{23}=\mathrm{id}_Z\times r$.  We say that $(Z,r)$ is {\it non-degenerate} if the component maps $\lambda_x,\,\rho_y$ are bijective for all $x,y\in Z$, and {\it involutive} if $r^2 = \mathrm{id}_{Z\times Z}$. In the rest of the paper, the term {\it solution} will always mean non-degenerate set-theoretic solution to the Yang--Baxter equation, but not necessarily involutive.

\smallskip

Given any non-empty set $Z$, we always have the so-called {\it flip} solution
\[ r_{Z,\mbox{\tiny flip}} : (x,y) \in Z\times Z\longmapsto (y,x)\in Z\times Z.\]
This is the most trivial type of solution, and solutions $(Z,r)$ that resemble the flip are easier to understand. For example, this happens when $r$ acts as the flip on many pairs of elements of $Z$. This encompasses certain relevant types of solutions, including some particular cases of those considered in \cite{ballester}, as well as the so-called {\it multipermutation solutions}. The latter are solutions that can be retracted into the trivial solution over a singleton after finitely many natural identification steps. As mentioned in \cite{multipermut}, computer experiments suggest that almost all solutions are multipermutation. 
It turns out that (also noted in \cite{multipermut}) a solution is multipermutation if and only if the so-called {\it structure skew brace} (an algebraic structure attached to the solution) satisfies a suitable nilpotency condition, which in turn implies that the solution contains many ‘‘involutive’’ pairs of elements fixing each other under the action of~$\lambda$. In view of this, we introduce the following two probabilities. 

\begin{definition}\label{defn:Pb1}
Let $(Z,r)$ be a finite solution. Define
\[\Pb_{\mbox{\tiny flip},1}(Z,r)  = \frac{1}{|Z|^2} | \{(x,y) \in Z^2\,:\, r(x,y) =(y,x),\, r(y,x) = (x,y)\}|,\]
and similarly
\[\Pb_{\mbox{\tiny flip},2}(Z,r)  = \frac{1}{|Z|^2}|\{(x,y)\in Z^2 \,:\, r^2(x,y) = (x,y),\, \lambda_x(y) = y\} |.\]
For both $i=1,2$, we have
\begin{align*}
(Z,r)\mbox{ is a flip solution}  \iff \Pb_{\mbox{\tiny flip},i}(Z,r)=1,
\end{align*}
so these probabilities measure how far $(Z,r)$ is from being a flip, but in two different ways: the first in a direct and pointwise manner, while the second in a way that exploits  the relation with multipermutation solutions.
\end{definition}

The probability of being a flip can also be localized to the first or second component as follows. 

\begin{definition}\label{defn:Pb2}
Let $(Z,r)$ be a finite solution. Define
\[\Pb_{\mbox{\tiny left}}(Z,r)  = \frac{1}{|Z|^2} | \{(x,y) \in Z^2\,:\, \lambda_x(y)=y\}|,\]
and similarly
\[\Pb_{\mbox{\tiny right}}(Z,r)  = \frac{1}{|Z|^2}|\{(x,y)\in Z^2 \,:\, \rho_y(x)=x\} |.\]
\end{definition}

Our main results focus on the four probabilities introduced above, in the context of finite solutions derived from skew braces, as follows. Recall that a {\it skew \textnormal(left\textnormal) brace} is a set $B = (B,+,\circ)$ endowed with two group structures $(B,+)$ and~$(B,\circ)$ such that the {\it skew \textnormal(left\textnormal) distributivity law}
\[ a\circ (b+c) = a\circ b - a + a\circ c\]
holds for all $a,b,c\in B$. It is easy to show that $(B,+)$ and $(B,\circ)$ share the same identity element $0$. It is also well-known (e.g. see \cite{Childs},\cite{GV}) that we can construct a solution by defining 
\[ r_B : (a,b)\in B\times B\longmapsto (-a+a\circ b,(-a+a\circ b)^{-1}\circ a\circ b)\in B\times B, \]
and that $r_B$ is involutive if and only if $B$ is a brace (i.e.~if the additive group $(B,+)$ is abelian). Following \cite{lastpaper}, we refer to the solutions arising from skew braces in this way as {\it sb-solutions}. For example, we have
\[ r_B : (a,b)\in B\times B\longmapsto (b,-b + a +b)\in B\times B\]
when $B$ is {\it trivial}, i.e. $a\circ b = a+b$ for all $a,b\in B$, and
\[ r_B : (a,b)\in B\times B\longmapsto (-a+b + a,a)\in B\times B\]
when $B$ is {\it almost trivial}, i.e. $a\circ b = b+a$ for all $a,b\in B$. We show that for finite sb-solutions, the  probabilities in Definitions \ref{defn:Pb1} and \ref{defn:Pb2} exhibit a discrete behaviour below the value $1$. 


\medskip

\noindent\textbf{Theorem A.}\quad {\it Let $\Pb$ be any of the probabilities $\Pb_{\mathrm{\tiny flip,1}}, \Pb_{\mathrm{\tiny flip,2}},\Pb_{\mathrm{\tiny left}}$, and $\Pb_{\mathrm{\tiny right}}$. Let $p$ be a prime. For any finite sb-solution $(Z,r)$, we have\textnormal:
\begin{enumerate}[$(a)$]
\item If $|Z|$ is odd and admits $p$ as its smallest prime divisor, then either
\[
\Pb(Z,r)\leq \frac{2p+1}{p(p+2)}
\quad \mbox{or}\quad
\Pb(Z,r)\in \left\{\frac{2p-1}{p^2},1\right\}.\]
\item If $|Z|$ is a power of $p$, then either
\[
\Pb(Z,r)\leq \frac{p^2+p-1}{p^3}
\quad \mbox{or}\quad
\Pb(Z,r)\in \left\{\frac{2p-1}{p^2},1\right\}.
\]
\end{enumerate}
}

\medskip

The odd-order assumption in Theorem A(a) excludes the case $p=2$; the even-order behaviour is treated in Theorems B and C, where a better bound than $\frac{5}{8} = \frac{2\cdot 2 + 1}{2(2+2)}$ is obtained and there are more discrete values.


\medskip

\noindent\textbf{Theorem B.}\quad {\it Let $\Pb$ be any of the probabilities $\Pb_{\mathrm{\tiny flip,2}},\Pb_{\mathrm{\tiny left}}$, and $\Pb_{\mathrm{\tiny right}}$. For any finite sb-solution $(Z,r)$, we have
\[ \Pb(Z,r) \leq \frac{7}{12}\quad \mbox{or}\quad \Pb(Z,r)\in \left\{\frac{3}{5},\frac{5}{8},\frac{2}{3},\frac{3}{4},1\right\}.\]
}

\medskip

We omitted the probability $\Pb_{\mathrm{\tiny flip,1}}$ in Theorem B because by exploiting the symmetric property of its defining conditions, we can get a better upper bound than $\frac{7}{12}$, and there are fewer possibilities for the discrete values.

\medskip

\noindent \textbf{Theorem C.}\quad {\it For any finite sb-solution $(Z,r)$, we have
\[ \Pb_{\mathrm{\tiny flip,1}}(Z,r) \leq \frac{9}{16}\quad \mbox{or}\quad \Pb_{\mathrm{\tiny flip,1}}(Z,r)\in \left\{\frac{5}{8}, \frac{3}{4},1\right\}.\]
}

\medskip

We also show that all of the discrete values appearing in Theorems A, B, and C indeed occur as the probabilities under consideration of some finite sb-solution (see the examples in Section \ref{sec:example}). Moreover, we provide a complete characterisation, in terms of certain quantities associated to the skew brace from which the sb-solution arises, of when these discrete values are realised (see the tables at the end of Section \ref{sec:proof}).

\medskip

The probabilities in Definitions \ref{defn:Pb1} and \ref{defn:Pb2} can also be related to natural invariants and structural notions of skew braces. In fact, let $B = (B,+,\circ)$ be a (finite) skew brace. For any $a,b\in B$, write
\[ a\ast b = -a + a\circ b - b,\quad [a,b]_+ = a + b - a - b.\]
Recall that the {\it annihilator} of $B$ is defined by
\[\Ann(B)= \{a\in B \mid\forall b\in B: a \ast b = [a,b]_+ = b\ast a = 0\},\]
and the {\it socle} of $B$ is defined by
\[ \Soc(B) = \{a\in B \mid \forall b\in B: a\ast b=[a,b]_+=0\}.\]
We will show in Propositions \ref{prop:solution1}(i) and \ref{prop:solution1}(ii), respectively, that
\[
\Pb_{\mathrm{\tiny flip,1}}(B,r_B)  = \frac{1}{|B|^2} |\{(a,b) \in B\times B \,:\, a \ast b = [a,b]_+= b\ast a=0\}|,
\]
which is the {\it commuting probability} considered in \cite{Manoj}, and similarly
\[
\Pb_{\mathrm{\tiny flip,2}}(B,r_B)  = \frac{1}{|B|^2} |\{(a,b) \in B\times B \,:\, a\ast b = [a,b]_+=0\}|.\]
Thus, from the skew brace perspective, these probabilities measure how far~$B$ is from being a trivial brace, but again in two different ways: the first via the annihilator while the second via the socle. It is clear that
\[
\Pb_{\mathrm{\tiny left}}(B,r_B)  = \frac{1}{|B|^2} |\{(a,b) \in B\times B \,:\,a \circ b = a+b \}|,
\]
and we will show in Proposition \ref{prop:solution1}(iii) that
\[
\Pb_{\mathrm{\tiny right}}(B,r_B)  = \frac{1}{|B|^2} |\{(c,a) \in B\times B \,:\, c\circ a=a+c  \}|.
\]
Thus, from the skew brace perspective, these probabilities measure how far~$B$ is from being trivial and almost trivial, respectively.

\smallskip

In the last two sections, we will introduce two more probabilities about arbitrary solutions. Section~\ref{sec:indecomp} concerns the probability that two elements of a solution lie in the same indecomposable component. We provide lower and upper bounds for it (see Theorem \ref{thm:component-bounds}). For
 example, if this probability is less than $1/k$, then the solution has at least $k+1$ indecomposable components (see Corollary~\ref{cor:number-components}).  Section~\ref{sec:almost-yang-baxter-maps} concerns the probability that a bijective non-degenerate map is a solution. It is worth emphasising that in contrast to our main results above, these two probabilities do not admit a uniform gap below~$1$ --- they can be made arbitrarily close to $1$ (see~The\-o\-rem~\ref{thm:no-uniform-yb-gap} and the remark at the end of Section \ref{sec:indecomp}).
 
\section{A general probability framework}\label{sec:general}

Before delving into the proof of Theorems A, B, and C, we introduce a general framework for probabilities attached to statements on a Cartesian product. This first part is purely set-theoretic and does not need any skew brace structure. Thus, let $X, \, Y$ be any non-empty finite sets, and let $W$ be a collection of statements $w(x,y)$ with variables $x\in X,\, y\in Y$. Below, we will adopt the convention that $\frac{1}{0}=\infty$ and $\frac{1}{\infty}=0$.


\begin{definition}\label{defn:W}
The \textit{{$W$-probability of the pair $(X,Y)$}} is
\[\Pb_W(X,Y) = \frac{1}{|X||Y|} |W(X,Y)|,
\] where \[ W(X,Y) = \{(x,y) \in X\times Y \mid\forall w\in W: w(x,y)\}.\]
This is the probability that a randomly selected element of $X\times Y$ satisfies all of the statements in $W$. Moreover, we define
\begin{align*}
    F_{1,W}(X,Y) &= \{x\in X \mid \forall y\in Y: (x,y) \in W(X,Y)\},\\
    F_{2,W}(X,Y) &= \{y\in Y \mid \forall x\in X: (x,y)\in W(X,Y)\}.
\end{align*}
For each $x\in X$, we further define
\[ C_W(x;Y) = \{y\in Y \mid (x,y) \in W(X,Y)\}.\]
Observe that by definition, we have
\begin{equation}\label{eqn:FC} F_{2,W}(X,Y) = \bigcap_{x\in X} C_W(x;Y) = Y\cap \Bigg(\bigcap_{x\in X\setminus F_{1,W}(X,Y)} C_W(x;Y)\Bigg) ,\end{equation}
which we interpret as $Y$ when the intersection is empty.
\end{definition}

It is clear from the definition that
\[
F_{1,W}(X,Y) = X \iff \Pb_W(X,Y) = 1 \iff F_{2,W}(X,Y) = Y.
\] 
This case is not interesting, so in what follows we assume that 
\begin{equation}\label{eqn:assumption} F_{1,W}(X,Y) \subsetneq X,\quad F_{2,W}(X,Y)\subsetneq Y.\end{equation}
Observe that we can write
\begin{align}\notag
    \Pb_W(X,Y) & = \frac{1}{|X||Y|}\sum_{x\in X}|C_W(x;Y)|\\\notag
    & = \frac{1}{|X||Y|} \Bigg( \sum_{x\in F_{1,W}(X,Y)} |Y| + \sum_{x\in X\setminus F_{1,W}(X,Y)} |C_W(x;Y)|\Bigg)\\\label{eqn:PbW}
    &=\frac{|F_{1,W}(X,Y)|}{|X|} + \frac{1}{|X|}\sum_{x\in X\setminus F_{1,W}(X,Y)}\frac{|C_W(x;Y)|}{|Y|}.
\end{align}
In order to estimate this probability, we put
\[ f_{1,W}(X,Y) = \frac{|X|}{|F_{1,W}(X,Y)|},\quad
f_{2,W} (X,Y)= \frac{|Y|}{|F_{2,W}(X,Y)|}. 
\]
For each $x\in X$, we also put
\[ c_W(x;Y) = \frac{|Y|}{|C_W(x;Y)|}.
\]
Only the elements $x\not\in F_{1,W}(X,Y)$ are of interest here, and we define
\[ c_W(X,Y) = \min\{c_W(x;Y) : x\in X\setminus F_{1,W}(X,Y)\}.\]
Note that this is $\infty$ if and only if $C_W(x;Y)$ is empty for all $x\not\in F_{1,W}(X,Y)$. It will also be helpful to consider the symmetric expression
\[ \varphi(u,v) = \frac{1}{u}+ \left(1-\frac{1}{u}\right)\frac{1}{v}= \frac{1}{v} + \left(1-\frac{1}{v}\right)\frac{1}{u},\mbox{ for }u,v \in [1,\infty].\]
Clearly $\varphi(u,v)$ is decreasing both as a function of $u$ and as a function of $v$. This simple fact is important when we execute the estimates. Since $\frac{1}{\infty}= 0$ by our convention, we have $\varphi(\infty,u)=\varphi(u,\infty)=1/u$, which is interpreted as $0$ when $u$ is also $\infty$.

\begin{prop}\label{prop:W} We always have
\[\Pb_W(X,Y) \leq \varphi(c_W(X,Y),f_{1,W}(X,Y)).\]
Moreover, we have the equality
\begin{align*}
\Pb_W(X,Y) &=\varphi(f_{2,W}(X,Y),f_{1,W}(X,Y)),\end{align*}
 provided that
\begin{align}\label{eqn:constant C}
C_W(x;Y) &= C_W(x';Y) \mbox{ for all }x,x'\in X\setminus F_{1,W}(X,Y).
\end{align}
\end{prop}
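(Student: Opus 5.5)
Both assertions will be read off the decomposition \eqref{eqn:PbW}. Throughout, abbreviate $F_1=F_{1,W}(X,Y)$, $F_2=F_{2,W}(X,Y)$, $f_1=f_{1,W}(X,Y)$, $f_2=f_{2,W}(X,Y)$ and $c=c_W(X,Y)$, so that $|F_1|/|X|=1/f_1$.

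For the inequality, each $x\in X\setminus F_1$ satisfies $c_W(x;Y)\geq c$ by definition of $c$. Hence $|C_W(x;Y)|/|Y| = 1/c_W(x;Y)\leq 1/c$, with the convention $1/\infty=0$ covering the case where $C_W(x;Y)$ is empty. Summing over the $|X|-|F_1|$ elements of $X\setminus F_1$ in \eqref{eqn:PbW} gives
\[
\Pb_W(X,Y)\leq \frac{1}{f_1}+\left(1-\frac{1}{f_1}\right)\frac{1}{c}=\varphi(c,f_1).
\]
The only degenerate cases are $c=\infty$, where both sides reduce to $1/f_1$, and $F_1=\emptyset$, where $f_1=\infty$. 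Both are consistent with the stated conventions for $\varphi$, since $\varphi(u,\infty)=1/u$ and $\varphi(\infty,\infty)=0$.

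For the equality, assume \eqref{eqn:constant C}. By \eqref{eqn:assumption}, the set $X\setminus F_1$ is nonempty, so we may let $C$ denote the common value of $C_W(x;Y)$ for $x\in X\setminus F_1$. Then \eqref{eqn:FC} gives $F_2 = Y\cap C = C$. Consequently $|C_W(x;Y)|/|Y| = |F_2|/|Y| = 1/f_2$ for every $x\notin F_1$. Substituting this into \eqref{eqn:PbW} yields exactly $\frac{1}{f_1}+\left(1-\frac{1}{f_1}\right)\frac{1}{f_2}=\varphi(f_2,f_1)$. If $F_2$ happens to be empty, then $f_2=\infty$ and the formula still holds under the conventions.

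There is no real obstacle here. The only points needing care are the conventions for $\infty$ (empty $F_1$ or empty $C_W(x;Y)$) and the use of \eqref{eqn:assumption} to ensure that the intersection in \eqref{eqn:FC} is over a nonempty index set.
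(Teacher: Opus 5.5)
Your proof is correct and follows the paper's argument. You bound each summand in \eqref{eqn:PbW} by $1/c_W(X,Y)$, and under \eqref{eqn:constant C} you use \eqref{eqn:FC} (over the nonempty index set guaranteed by \eqref{eqn:assumption}) to identify the common $C_W(x;Y)$ with $F_{2,W}(X,Y)$; the paper merely phrases this last step as the inequality becoming an equality with $c_W(X,Y)=f_{2,W}(X,Y)$.
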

\begin{proof}
    It follows immediately from \eqref{eqn:PbW} that
\begin{align*}
\Pb_W(X,Y) & \leq \frac{|F_{1,W}(X,Y)|}{|X|} + \frac{1}{|X|} \cdot |X\setminus F_{1,W}(X,Y)|\cdot \frac{1}{c_W(X,Y)}\\
& = \frac{1}{|X|} \Bigg( |F_{1,W}(X,Y)| + \frac{1}{c_W(X,Y)}\cdot |X\setminus F_{1,W}(X,Y)|\Bigg)\\
&= \frac{1}{|X|} \Bigg( \frac{1}{c_W(X,Y)}\cdot |X| + \Bigg(1-\frac{1}{c_W(X,Y)}\Bigg) |F_{1,W}(X,Y)| \Bigg)\\
& = \frac{1}{c_W(X,Y)} + \Bigg(1-\frac{1}{c_W(X,Y)}\Bigg)\frac{1}{f_{1,W}(X,Y)},
\end{align*}
and this yields the first claim. Under the hypothesis \eqref{eqn:constant C}, the inequality in the above calculation is in fact an equality, and \eqref{eqn:FC} gives 
\[F_{2,W}(X,Y)=C_W(x;Y)\mbox{ for all }x\in X\setminus F_{1,W}(X,Y).\]
In this case, we have $c_W(X,Y)=f_{2,W}(X,Y)$, whence the second claim.
\end{proof}

We now specialise the preceding set-theoretic notation to the skew-brace setting. Let $B = (B,+,\circ)$ be a finite skew brace, and take $X=Y=B$ in the above notation. For simplicity, we write 
\[
\begin{aligned}
\Pb_W(B)&=\Pb_W(B,B),&
F_{i,W}(B)&=F_{i,W}(B,B) \,\ (i=1,2),\\
c_W(B)&=c_W(B,B),&f_{i,W}(B)&=f_{i,W}(B,B)\,\ (i=1,2).
\end{aligned}
\]


\begin{assumption}\label{assumption1} In what follows, we assume that for some $\bullet,\diamond\in \{+,\circ\}$, which need not be distinct, both of the conditions
\begin{enumerate}[$(1)$]
\item $F_{1,W}(B)$ is a subgroup of $(B,\bullet)$;
\item $C_W(a;B)$ is a subgroup of $(B,\diamond)$ for all $a\in B$;
\end{enumerate}
hold, and we further assume that
\begin{enumerate}[$(1)$]\setcounter{enumi}{+2}
\item $C_W(a;B)=C_W(a';B)$ for all $a,a'\in B\setminus F_{1,W}(B)$ if $f_{1,W}(B)$ is a prime.
\end{enumerate}
Moreover, we continue to assume \eqref{eqn:assumption}, so we have $\Pb_{W}(B) < 1$.
\end{assumption}

Conditions (1) and (2), together with \eqref{eqn:FC}, imply that
\[ f_{1,W}(B)\in \mathbb{N},\quad \forall a\in B: c_W(a;B)\in \mathbb{N},\quad c_{W}(B)\in \mathbb{N},
\quad f_{2,W}(B)\in \mathbb{N},\]
and they all divide $|B|$. Note that
\[ f_{1,W}(B),f_{2,W}(B) \geq 2,\quad  c_W(a;B) \geq c_W(B)\geq 2\mbox{ for }a\not\in F_{1,W}(B)\]
by \eqref{eqn:assumption} and by definition, respectively. Moreover, we have
\begin{equation}\label{eqn:inclusions} \forall a\in B:  F_{2,W}(B) \subseteq C_W(a;B) \subseteq B\end{equation}
by \eqref{eqn:FC}. These are subgroups of $(B,\diamond)$ by Condition (2), so then
\begin{equation}\label{eqn:divides}
\forall a\in B: c_{W}(a;B)\mbox{ divides } f_{2,W}(B). 
\end{equation}
We will need this observation for the last theorem in this section.

\smallskip

Here is our strategy. Let $f\geq 2$ be an integer, whose exact value will be specified later. For $f_{1,W}(B)\geq f$, it follows from Proposition \ref{prop:W} that
\begin{equation}\label{eqn:upper bound} \Pb_{W}(B) \leq \varphi(c_W(B),f_{1,W}(B))\leq \varphi(p,f),
\end{equation}
where $p$ is the smallest prime divisor of $|B|$. This gives us an upper bound, which is small when $f$ is big. Thus, we want to take $f$ as large as possible, but we also need to be able to understand what happens when
\[ 2\leq f_{1,W}(B)\leq f-1
\quad\mbox{and}\quad \Pb_W(B) > \varphi(p,f).\]
Condition (3) now comes in, because it allows us to exactly compute
\begin{equation}\label{eqn:prime}
    \Pb_W(B) = \varphi(f_{2,W}(B),f_{1,W}(B))=\varphi(f_{1,W}(B),f_{2,W}(B))
\end{equation}
using Proposition~\ref{prop:W} when $f_{1,W}(B)$ is a prime.

\begin{theorem}\label{thm:Pb1} Assume \textnormal{Hypothesis \textnormal{\ref{assumption1}}} and let $p$ be a prime.
\begin{enumerate}[$(a)$]
\item If $|B|$ admits $p$ as its smallest prime divisor, then either
\[ \Pb_W(B) \leq \varphi(p,p+2)\quad\mbox{or}\quad
\Pb_W(B) \in \{\varphi(2,3), \varphi(p,p)\}, 
\]
where $\varphi(2,3)$ occurs only when $p=2$, and we have
\begin{align*}
    \Pb_W(B) = \varphi(2,3) & \iff \{f_{1,W}(B),f_{2,W}(B)\} =\{2,3\},\\
    \Pb_W(B) = \varphi(p,p) & \iff f_{1,W}(B)=f_{2,W}(B)=p.
\end{align*}
\item If $|B|$ is a power of $p$, then either
\[ \Pb_W(B) \leq \varphi(p,p^2)\quad\mbox{or}\quad
\Pb_W(B) = \varphi(p,p),
\]
and we have
\[\Pb_W(B) = \varphi(p,p) \iff f_{1,W}(B)=f_{2,W}(B)=p.\]
\end{enumerate}
\end{theorem}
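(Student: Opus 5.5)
We split into cases according to the value of $f_1:=f_{1,W}(B)$, writing also $f_2:=f_{2,W}(B)$ and $c:=c_W(B)$. By Conditions (1) and (2) these are integers dividing $|B|$. Moreover $f_1,f_2\ge 2$ and $c\ge 2$. Hence every one of them is at least the smallest prime divisor $p$ of $|B|$, and in case (b) each is a power of $p$.

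\emph{Part (a).} Take $f=p+2$ in \eqref{eqn:upper bound}. If $f_1\geq p+2$, then Proposition~\ref{prop:W} and the monotonicity of $\varphi$ give $\Pb_W(B)\le\varphi(c,f_1)\le\varphi(p,p+2)$.

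So assume $p\le f_1\le p+1$. Since $f_1$ divides $|B|$, each prime divisor of $f_1$ is at least $p$. For $f_1=p+1$ with $p$ odd, $p+1$ is even and so is divisible by $2<p$, which is impossible. For $p=2$ we have $p+1=3$, which is prime. In either case $f_1$ is prime, namely $f_1=p$, or $f_1=3$ with $p=2$. Condition (3) therefore applies, and \eqref{eqn:prime} gives $\Pb_W(B)=\varphi(f_1,f_2)$.

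Now use the symmetric and decreasing nature of $\varphi$ on the value of $f_2\ge p$.
\begin{itemize}
\item If $f_1=p$: either $f_2=p$, giving $\varphi(p,p)$, or $f_2=p+1$, which is possible only when $p=2$ and gives $\varphi(2,3)$, or $f_2\ge p+2$, giving at most $\varphi(p,p+2)$.
\item If $f_1=3$ with $p=2$: either $f_2=2$, giving $\varphi(3,2)=\varphi(2,3)$, or $f_2\ge 3$, giving at most $\varphi(3,3)$. A direct computation shows $\varphi(3,3)=5/9\le\varphi(2,4)=5/8$.
\end{itemize}

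For the "iff" statements, we check that $\varphi(p,p)$ and $\varphi(2,3)$ both strictly exceed $\varphi(p,p+2)$. We also check that $\varphi(2,3)\neq\varphi(2,2)$. Then the values $\varphi(p,p)$ and $\varphi(2,3)$ can arise only in the cases listed above. Conversely, if $\{f_1,f_2\}$ is as stated, then $f_1$ is prime and \eqref{eqn:prime} yields the value. One subtlety: $\varphi(u,v)$ with $u,v$ integers $\ge p$ could accidentally equal $\varphi(p,p)$ in a non-listed case. This cannot happen, because every non-listed case gives at most $\varphi(p,p+2)<\varphi(p,p)$.

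\emph{Part (b).} Now all of $f_1,f_2,c$ are powers of $p$. Take $f=p^2$. If $f_1\ge p^2$, then $\Pb_W(B)\le\varphi(p,p^2)$. Otherwise $f_1=p$ is prime, so $\Pb_W(B)=\varphi(p,f_2)$ by \eqref{eqn:prime}. Then either $f_2=p$, or $f_2\ge p^2$ and $\Pb_W(B)\le\varphi(p,p^2)$. The equivalence follows as in part (a), since $\varphi(p,p)>\varphi(p,p^2)$.

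\emph{Main obstacle.} The only delicate point is the arithmetic observation that $f_1\in\{p,p+1\}$ forces $f_1$ to be prime. This is what allows Condition (3) to turn the inequality into the exact formula. Everything else is bookkeeping with the monotonicity of $\varphi$. The divisibility \eqref{eqn:divides} does not seem to be needed here, since we only use $f_2\ge p$ and, in (b), that $f_2$ is a power of $p$.
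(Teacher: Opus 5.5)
Your proposal is correct and follows the paper's proof essentially step for step. It uses the same choices $f=p+2$ and $f=p^2$, and the same observation that $f_{1,W}(B)<f$ forces $f_{1,W}(B)$ to be prime ($p$, or $3$ when $p=2$), so that Condition (3) and \eqref{eqn:prime} give $\Pb_W(B)=\varphi(f_{1,W}(B),f_{2,W}(B))$, followed by the same case analysis on $f_{2,W}(B)$. Your extra bookkeeping for the equivalences is sound; for odd $p$ you should also note $\varphi(p,p)\leq\frac{5}{9}<\varphi(2,3)$, which is what shows that $\varphi(2,3)$ occurs only when $p=2$.
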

\begin{proof} Take $f=p+2$ in (a) and $f=p^2$ in (b), respectively. 
\begin{enumerate}[$\bullet$]
\item For $f_{1,W}(B) \geq f$, we have $\Pb_W(B)\leq \varphi(p,f)$ by \eqref{eqn:upper bound}. 
\item For $2\leq f_{1,W}(B)\leq f-1$, because $f_{1,W}(B)$ divides $|B|$, we see that
\[ f_{1,W}(B) = \begin{cases}
p & \mbox{in (a) when $p\geq 3$ and in (b)},\\
2,3 &\mbox{in (a) when $p=2$},
\end{cases}\]
where we have used the minimality of $p$ in (a). Since $f_{1,W}(B)$ is a prime in all cases, we may apply \eqref{eqn:prime} to deduce that
\[ \Pb_W(B) = \varphi(f_{2,W}(B),f_{1,W}(B))=\varphi(f_{1,W}(B),f_{2,W}(B)).\]
Now, either $f_{2,W}(B) \geq f$, or by the same reason as above
\[ f_{2,W}(B) = \begin{cases}
p & \mbox{in (a) when $p\geq 3$ and in (b)},\\
2,3 &\mbox{in (a) when $p=2$}.
\end{cases}\]
It is then easy to check that $\Pb_W(B) \leq \varphi(p,f) = \varphi(f,p)$ unless
\[ f_{1,W}(B) = f_{2,W}(B) = p,\]
and in (a) when $p=2$, we also have the possibilities
\[ (f_{1,W}(B),f_{2,W}(B)) = (2,3),(3,2).\]
These cases give us the exceptional values.
\end{enumerate}
This proves  all of the claims.
\end{proof}

In the case $p=2$, by allowing more discrete values, we can improve the bound $\varphi(2,4) = \frac{5}{8}$ in Theorem \ref{thm:Pb1}(a) by taking $f$ to be slightly bigger than~$4$. But in order to do this, we need the following additional hypothesis.

\begin{assumption}\label{assumption2}In what follows, assume further that:
\begin{enumerate}[(1)]\setcounter{enumi}{+3}
\item $|C_W(a;B)|= |C_W(a\bullet z;B)|$ for all $a\in B$ and $z\in F_{1,W}(B)$.
\end{enumerate}
Here $\bullet$ is the same operation in Hypothesis \ref{assumption1}.
\end{assumption}

By Condition (1), we may write
\[ B = F_{1,W}(B) \sqcup \bigsqcup_{i=1}^{f_{1,W}(B)-1} a_i\bullet F_{1,W}(B).\]
From Condition (4) and \eqref{eqn:PbW}, we then see that
\begin{align}\notag
\Pb_W(B) & = \frac{|F_{1,W}(B)|}{|B|}+\frac{1}{|B|}\sum_{i=1}^{f_{1,W}(B)-1} \sum_{a\in a_i\bullet F_{1,W}(B)}\frac{|C_{W}(a;B)|}{|B|}\\\notag
&=\frac{|F_{1,W}(B)|}{|B|}+\frac{|F_{1,W}(B)|}{|B|}\sum_{i=1}^{f_{1,W}(B)-1}\frac{|C_{W}(a_i;B)|}{|B|}\\\label{eqn:Pb3}
&= \frac{1}{f_{1,W}(B)} + \frac{1}{f_{1,W}(B)}\sum_{i=1}^{f_{1,W}(B)-1}\frac{1}{c_{W}(a_i;B)}.
\end{align}
Thus, even when $f_{1,W}(B)$ is not a prime, we can still compute $\Pb_W(B)$ as long as we can control the values of $c_{W}(a_i;B)$ for $i=1,\dots,f_{1,W}(B)-1$. It will be helpful to recall that they are integers greater than or equal to $2$.

\begin{theorem}\label{thm:Pb2} 
Assume \textnormal{Hypotheses} \textnormal{\ref{assumption1}} and \textnormal{\ref{assumption2}}. Then, either
\[ \Pb_W(B) \leq \varphi(2,6)\quad\mbox{or}\quad\Pb_W(B) \in \{\varphi(2,5),\varphi(2,4),\varphi(2,3),\varphi(2,2)\}, \]
and we have
\begin{align*}
\Pb_W(B) = \varphi(2,5) & \iff \{f_{1,W}(B),f_{2,W}(B)\} = \{2,5\},\\
\Pb_W(B) = \varphi(2,3) & \iff \{f_{1,W}(B),f_{2,W}(B)\} = \{2,3\},\\
\Pb_W(B) =  \varphi(2,2)& \iff f_{1,W}(B) = f_{2,W}(B) = 2,
\end{align*}
and
\begin{align*}
&
\Pb_W(B) = \varphi(2,4) 
\\ & \hspace{1em}\iff \begin{cases} f_{1,W}(B)= 2,\, f_{2,W}(B)=4, \, \mbox{or}\\
f_{1,W}(B)=4,\, \forall a\in B\setminus F_{1,W}(B): c_{W}(a;B)=2.
\end{cases}
\end{align*}
\end{theorem}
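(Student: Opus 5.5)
We follow the strategy of Theorem \ref{thm:Pb1}, now with $f=6$. Compute first: $\varphi(2,6)=7/12$, $\varphi(2,5)=3/5$, $\varphi(2,4)=5/8$, $\varphi(2,3)=2/3$, $\varphi(2,2)=3/4$. The case $f_{1,W}(B)\geq 6$ is handled by \eqref{eqn:upper bound}: since $c_W(B)\geq 2$ and $\varphi$ is decreasing in each variable, $\Pb_W(B)\leq\varphi(2,6)$. So we may assume $2\le f_{1,W}(B)\le 5$. Note that $f_{1,W}(B)$ is a divisor of $|B|$ (Condition (1)), but we do not need the smallest prime of $|B|$ here, since all $c$'s are at least $2$ anyway.

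\emph{Prime case} $f_{1,W}(B)\in\{2,3,5\}$. Condition (3) and \eqref{eqn:prime} give $\Pb_W(B)=\varphi(f_{1,W}(B),f_{2,W}(B))$ with $f_{2,W}(B)\ge 2$ an integer. For $f_{1,W}(B)=2$ we get $\varphi(2,f_{2,W}(B))$, which is $\le\varphi(2,6)$ when $f_{2,W}(B)\ge 6$ and otherwise one of $\varphi(2,2),\dots,\varphi(2,5)$. For $f_{1,W}(B)=3$ we get $\varphi(3,2)=\varphi(2,3)$, or $\varphi(3,v)\le\varphi(3,3)=5/9<7/12$ when $v\geq 3$. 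For $f_{1,W}(B)=5$ we get $\varphi(5,2)=\varphi(2,5)$, or $\varphi(5,v)\le\varphi(5,3)=7/15<7/12$.

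\emph{Case} $f_{1,W}(B)=4$. This is where Hypothesis \ref{assumption2} enters: by \eqref{eqn:Pb3}, $\Pb_W(B)=\frac14+\frac14\sum_{i=1}^{3}1/c_W(a_i;B)$ with each $c_W(a_i;B)\ge 2$ an integer. If all three equal $2$, then $\Pb_W(B)=\frac14+\frac38=\frac58=\varphi(2,4)$. Otherwise some $c_W(a_i;B)\ge3$ and $\Pb_W(B)\le\frac14+\frac14(\frac12+\frac12+\frac13)=\frac7{12}=\varphi(2,6)$. By Condition (4), the values $c_W(a;B)$ are constant on cosets $a_i\bullet F_{1,W}(B)$, so ``all $c_W(a_i;B)=2$'' is the same as ``$c_W(a;B)=2$ for all $a\notin F_{1,W}(B)$''.

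\emph{Characterisations.} These come from reading off the cases above, together with the distinctness of the five values. The value $\varphi(2,5)$ occurs only when $f_{1,W}(B)=2,\,f_{2,W}(B)=5$ or $f_{1,W}(B)=5,\,f_{2,W}(B)=2$, and similarly for $\varphi(2,3)$. The value $\varphi(2,2)$ occurs only when both equal $2$. The value $\varphi(2,4)$ occurs either with $f_{1,W}(B)=2,\,f_{2,W}(B)=4$, or with $f_{1,W}(B)=4$ and all $c=2$. The case $(4,2)$ is not listed separately because it falls under the second alternative.

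For the converses we need that each listed configuration actually yields the stated value. In the prime cases this is \eqref{eqn:prime}, which is symmetric in its two arguments. For example, $f_{1,W}(B)=5,\,f_{2,W}(B)=2$ gives $\varphi(5,2)=\varphi(2,5)$, and $f_{1,W}(B)=2,\,f_{2,W}(B)=4$ gives $\varphi(2,4)$ by Condition (3). For the $f_{1,W}(B)=4$ alternative the converse follows from \eqref{eqn:Pb3}. The main obstacle is the bookkeeping in the $f_{1,W}(B)=4$ case, that is, checking that the threshold $7/12$ is exactly attained by the next configuration $(2,2,3)$ and so is not an exceptional value. The remaining work is routine checking that no other combination lands strictly between $7/12$ and $1$.
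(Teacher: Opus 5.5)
Your proposal is correct and follows the paper's proof essentially step for step. You split at $f=6$, handle the prime cases $f_{1,W}(B)\in\{2,3,5\}$ via \eqref{eqn:prime}, and treat $f_{1,W}(B)=4$ via \eqref{eqn:Pb3}, where the configuration $(2,2,3)$ gives exactly $\varphi(2,6)$. Your added remarks only make explicit what the paper leaves implicit: by Condition (4), ``all $c_W(a_i;B)=2$'' is equivalent to ``$c_W(a;B)=2$ for all $a\notin F_{1,W}(B)$'', and the pair $(f_{1,W}(B),f_{2,W}(B))=(4,2)$ falls under the second alternative, which you assert without proof but which follows at once from \eqref{eqn:divides}.
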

\begin{proof} Take $f = 6$ here.
\begin{enumerate}[$\bullet$]
\item For $f_{1,W}(B) \geq 6$, we have $\Pb_W(B) \leq \varphi(2,6)$ by \eqref{eqn:upper bound}.
\item For $f_{1,W}(B) =2,3,5$, this is a prime and so \eqref{eqn:prime} yields
\[ \Pb_W(B) = \varphi(f_{2,W}(B),f_{1,W}(B)) =\varphi(f_{1,W}(B),f_{2,W}(B)).\]
It is easy to check that $\Pb_W(B) \leq \varphi(2,6) = \varphi(6,2)$ unless
\[ (f_{1,W}(B),f_{2,W}(B)) \in  \{(2,2),(2,3),(2,4),(2,5),(3,2),(5,2)\},\]
where we have used the fact that $f_{2,W}(B)\geq 2$ is an integer.
\item For $f_{1,W}(B) =4$, we apply \eqref{eqn:Pb3} to deduce that 
\[ \Pb_W(B) =\frac{1}{4} + \frac{1}{4}\left(\frac{1}{2}+\frac{1}{2}+\frac{1}{2}\right) =\varphi(2,4)\]
if $c_W(a_i;B) = 2$ for every $i$, and
\[ \Pb_W(B) \leq \frac{1}{4} + \frac{1}{4}\left(\frac{1}{2}+\frac{1}{2}+\frac{1}{3}\right) =\varphi(2,6)\]
otherwise.
\end{enumerate}
This proves  all of the claims.
\end{proof}

We can improve the bound $\varphi(2,6) = \frac{7}{12}$ in Theorem \ref{thm:Pb2} even more, but we further need the following assumption.

\begin{assumption}\label{assumption3} In what follows, assume in addition that:
\begin{enumerate}[(1)]\setcounter{enumi}{+4}
\item $f_{1,W}(B) = f_{2,W}(B)$.
\end{enumerate}
Note that this is always the case when $W$ is {\it symmetric on $B$}, that is
\[ \forall a,b\in B: (a,b) \in W(B,B) \iff (b,a) \in W(B,B).\]
In fact, we even have $F_{1,W}(B) = F_{2,W}(B)$ when $W$ is symmetric on $B$.
\end{assumption}

Condition (5) not only allows us to replace $f_{2,W}(B)$ by $f_{1,W}(B)$ in \eqref{eqn:prime}, but also imposes restrictions on the $c_{W}(a_i;B)$ in \eqref{eqn:Pb3} by \eqref{eqn:divides}. This  significantly reduces the possibilities. 

\begin{theorem}\label{thm:Pb3} Assume \textnormal{Hypotheses \textnormal{\ref{assumption1}}}, \textnormal{\ref{assumption2}}, and \textnormal{\ref{assumption3}}. Then, either
\[ \Pb_W(B) \leq \varphi(2,8)\quad\mbox{or}\quad
\Pb_W(B) \in \{\varphi(2,6),\varphi(2,4),\varphi(2,2)\},\]
where $\varphi(2,6)$ does not occur when $W$ is symmetric on $B$ and $F_{1,W}(B) = F_{2,W}(B)$ is normal in $(B,\diamond)$, and we have
\begin{align*}
    \Pb_W(B) = \varphi(2,6) &\iff f_{1,W}(B)=6,\, \forall a\in B\setminus F_{1,W}(B): c_W(a;B)=2,\\
    \Pb_W(B) = \varphi(2,4) &\iff f_{1,W}(B)=4,\, \forall a\in B\setminus F_{1,W}(B): c_W(a;B)=2,\\
    \Pb_W(B) = \varphi(2,2) & \iff f_{1,W}(B) = 2.
\end{align*}
\end{theorem}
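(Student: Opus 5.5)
Write $f_1=f_{1,W}(B)$, $c_i=c_W(a_i;B)$, and argue by cases on $f_1$, as in Theorems \ref{thm:Pb1} and \ref{thm:Pb2}, with cutoff $f=8$.

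\emph{Case $f_1\ge 8$.} Then \eqref{eqn:upper bound} gives $\Pb_W(B)\le\varphi(2,8)$, since $c_W(B)\ge 2$.

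\emph{Case $f_1$ prime, i.e.\ $f_1\in\{2,3,5,7\}$.} Condition (3) lets us use \eqref{eqn:prime}, and Condition (5) gives $f_{2,W}(B)=f_1$, so $\Pb_W(B)=\varphi(f_1,f_1)$. Now $\varphi(3,3)=5/9<\varphi(2,8)=9/16$, and $\varphi(5,5)$, $\varphi(7,7)$ are smaller still. So the only value above the bound is $\varphi(2,2)$, and it occurs exactly when $f_1=2$.

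\emph{Cases $f_1\in\{4,6\}$.} Here \eqref{eqn:Pb3} gives
\[\Pb_W(B)=\frac1{f_1}+\frac1{f_1}\sum_{i=1}^{f_1-1}\frac1{c_i}.\]
By \eqref{eqn:divides} each $c_i$ divides $f_{2,W}(B)=f_1$ and satisfies $c_i\ge 2$.
\begin{itemize}
\item For $f_1=4$: each $c_i\in\{2,4\}$. If all $c_i=2$, we get $\varphi(2,4)$. Otherwise the value is at most $\frac14\bigl(1+\frac12+\frac12+\frac14\bigr)=\frac9{16}=\varphi(2,8)$.
\item For $f_1=6$: each $c_i\in\{2,3,6\}$. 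If all $c_i=2$, we get $\frac16\bigl(1+\frac52\bigr)=\frac7{12}=\varphi(2,6)$. Otherwise the value is at most $\frac16\bigl(1+2+\frac13\bigr)=\frac59<\varphi(2,8)$.
\end{itemize}
Together these cases prove the dichotomy and all three ``iff'' statements. For the reverse directions, note that each listed configuration of $f_1$ and the $c_i$ yields the stated value, and the listed values $\varphi(2,2)$, $\varphi(2,4)$, $\varphi(2,6)$ are pairwise distinct.

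\emph{Excluding $\varphi(2,6)$.} Assume $W$ is symmetric and $F:=F_{1,W}(B)=F_{2,W}(B)$ is normal in $(B,\diamond)$, and suppose $f_1=6$ with $c_W(a;B)=2$ for all $a\notin F$. Put $Q=(B,\diamond)/F$, a group of order $6$.
\begin{itemize}
\item For each $a\notin F$, the subgroup $C_W(a;B)$ of $(B,\diamond)$ contains $F$ by \eqref{eqn:inclusions} and has index $2$. So it corresponds to an index-$2$ subgroup $H_a$ of $Q$.
\item By symmetry, $a\in C_W(a;B)$ is equivalent to $(a,a)\in W(B,B)$, which is symmetric-invariant, so there is no immediate contradiction there. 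The better tool is \eqref{eqn:FC}: $F=\bigcap_{a\notin F}C_W(a;B)$, so $\bigcap_a H_a=1$ in $Q$.
\item Every group of order $6$ has a unique subgroup of index $2$, namely its Sylow $3$-subgroup. Hence all $H_a$ coincide, and their intersection has order $3$, not $1$.
\end{itemize}
This contradiction rules out $\varphi(2,6)$. In fact the argument seems to need only normality of $F$, not symmetry, but I will state it under the given hypotheses.

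The main obstacle I expect is this last step. If the argument via \eqref{eqn:FC} fails, I would fall back on symmetry: $b\in C_W(a;B)\iff a\in C_W(b;B)$. This makes the relation ``$H_a$ contains the class of $b$'' symmetric on $Q\setminus\{1\}$, and with a unique index-$2$ subgroup $H$ I would then try to derive a contradiction by counting.
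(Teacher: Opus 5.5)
Your case analysis is the paper's, and it is correct: the cutoff $f=8$, the prime cases via \eqref{eqn:prime} and Condition (5), and the cases $f_{1,W}(B)\in\{4,6\}$ via \eqref{eqn:Pb3}.

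The genuine difference is in excluding $\varphi(2,6)$. The paper, like you, passes to $Q=(B,\diamond)/F_{2,W}(B)$ and deduces that $C_W(a;B)=C$ for every $a\notin F_{1,W}(B)$, where $C/F_{2,W}(B)$ is the unique subgroup of order $3$. It then uses symmetry and $F_{1,W}(B)=F_{2,W}(B)$ to pick $a\in B\setminus C$ and $b\in C\setminus F_{2,W}(B)$ with $b\in C_W(a;B)$ but $a\notin C_W(b;B)$. You instead finish with \eqref{eqn:FC}: the intersection of these $C_W(a;B)$ must be $F_{2,W}(B)$, of index $6$, yet it equals $C$, of index $2$. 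This is shorter and, as you suspected, uses only normality of $F_{2,W}(B)$ in $(B,\diamond)$ together with Condition (5); symmetry contributes nothing.

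Your idea removes normality too. Each $C_W(a;B)$ with $a\notin F_{1,W}(B)$ has index $2$ in $(B,\diamond)$, so it is normal with quotient of order $2$. By \eqref{eqn:FC}, $F_{2,W}(B)$ is then the kernel of the natural map from $(B,\diamond)$ into a direct product of groups of order $2$. This forces $f_{2,W}(B)$ to be a power of $2$, contradicting $f_{2,W}(B)=f_{1,W}(B)=6$. So under Hypotheses \ref{assumption1}, \ref{assumption2}, and \ref{assumption3} alone, the value $\varphi(2,6)$ never occurs, and the proviso attached to it in the statement is not needed.

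Your aside about $a\in C_W(a;B)$ and the fallback counting plan can simply be dropped.
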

\begin{proof} Take $f=8$ here.
\begin{enumerate}[$\bullet$]
    \item For $f_{1,W}(B) \geq 8$, we have $\Pb_W(B) \leq \varphi(2,8)$ by \eqref{eqn:upper bound}.
    \item For $f_{1,W}(B) = 2,3,5,7$, this is a prime and so \eqref{eqn:prime} yields
    \[ \Pb_W(B) = \varphi(f_{2,W}(B),f_{1,W}(B))=\varphi(f_{1,W}(B),f_{1,W}(B)).\]
    It is easy to check that $ \Pb_W(B) \leq \varphi(2,8)$ unless $f_{1,W}(B)=2$.
\item For $f_{1,W}(B)=4$, notice that $c_{W}(a;B) = 2,4$ for all $a\in B\setminus F_{1,W}(B)$ by \eqref{eqn:divides} and Condition (5). We then apply \eqref{eqn:Pb3} to deduce that
\[ \Pb_W(B) =\frac{1}{4} + \frac{1}{4}\left(\frac{1}{2}+\frac{1}{2}+\frac{1}{2}\right) =\varphi(2,4)\]
if $c_W(a_i;B)=2$ for every $i$, and 
\[ \Pb_W(B) \leq \frac{1}{4} + \frac{1}{4}\left(\frac{1}{2}+\frac{1}{2}+\frac{1}{4}\right) =\varphi(2,8)\]
otherwise.
\item For $f_{1,W}(B) = 6$, we apply \eqref{eqn:Pb3} to deduce that
\[ \Pb_W(B) = \frac{1}{6} +\frac{1}{6}\left(\frac{1}{2}+\frac{1}{2}+\frac{1}{2}+\frac{1}{2}+\frac{1}{2}\right) = \varphi(2,6)\]
if $c_W(a_i;B)=2$ for every $i$, and
\[ \Pb_W(B) \leq  \frac{1}{6} +\frac{1}{6}\left(\frac{1}{2}+\frac{1}{2}+\frac{1}{2}+\frac{1}{2}+\frac{1}{3}\right) = \varphi(2,9) < \varphi(2,8)\]
otherwise.
\end{enumerate}
This proves all of the claims, except the one about $\varphi(2,6)$ not occurring as a possibility under certain assumptions.

\smallskip

Finally, suppose that $\Pb_W(B) = \varphi(2,6)$. We already know that then
\[ f_{1,W}(B) = 6,\quad \forall a\in B\setminus F_{1,W}(B) : c_W(a;B) = 2.\]
With respect to the operation $\diamond$, if $F_{2,W}(B)$ is normal in $B$, then let 
\[ Q = (B,\diamond)/(F_{2,W}(B),\diamond)\]
denote its quotient group. Since $|Q|= f_{2,W}(B)=f_{1,W}(B) =6$, it contains a unique subgroup $C/F_{2,W}(B)$ of order $3$. From \eqref{eqn:inclusions}, we then see that
\[ \forall a\in B\setminus F_{1,W}(B):C=C_W(a;B),\]
because $c_{W}(a;B)=2$ here. If $W$ is symmetric on $B$ in addition, then 
\[ \forall a,b\in B: a\in C_W(b;B) \iff b\in C_W(a;B).\]
We also have $F_{1,W}(B) = F_{2,W}(B)$, and hence
\[ \forall a\in B\setminus C,\, b\in C\setminus F_{2,W}(B): a\not\in C = C_W(b;B),\,  b \in C = C_W(a;B),\]
which is a contradiction.
\end{proof}

\section{Proof of Theorems A, B, and C}\label{sec:proof}

We apply the skew-brace specialisation of Section \ref{sec:general} to the collections 
\begin{align*}
W_{\operatorname{ann}}&=\{x\ast y=0,\, [x,y]_+=0,\, y\ast x=0\},\\
W_{\operatorname{soc}}&=\{x\ast y=0,\, [x,y]_+=0\}, \\
W_{\operatorname{tri}}&=\{\lambda_x(y)=y\},\\
W_{\operatorname{atri}}&=\{\lambda^{\operatorname{op}}_x(y)=y\},
\end{align*}
of statements with variables $x,y$ in any (finite) skew brace $B = (B,+,\circ)$. We will often use the well-known fact that
\begin{align*}
\lambda : a\in (B,\circ) &\longmapsto \lambda_a\in \mathrm{Aut}(B,+),&&\hspace{-0.75cm}(\lambda_a(b) = -a + a\circ b),\\
\lambda^{\operatorname{op}}: a\in (B,\circ) &\longmapsto \lambda_a^{\operatorname{op}}\in \mathrm{Aut}(B,+),&&\hspace{-0.75cm}(\lambda_a^{\operatorname{op}}(b) =a\circ b  -a ),
\end{align*}
are group homomorphisms without reference. We observe that
\begin{equation}\label{eqn:prob equality}
\begin{cases}
\begin{aligned}
\Pb_{\mathrm{flip},1}(B,r_B) &= \Pb_{W_{\mathrm{ann}}}(B),&\Pb_{\mathrm{left}}(B,r_B) &= \Pb_{W_{\mathrm{tri}}}(B),\\
\Pb_{\mathrm{flip},2}(B,r_B) &= \Pb_{W_{\mathrm{soc}}}(B),&\Pb_{\mathrm{right}}(B,r_B) &= \Pb_{W_{\mathrm{atri}}}(B) .
\end{aligned}\end{cases}
 \end{equation}
The equality about the probabilities $\Pb_{\mathrm{left}},\, \Pb_{W_{\mathrm{tri}}}$ is trivial, and the others follow from Proposition \ref{prop:solution1} below. Here, recall that
\[ r_B: (a,b)\in B\times B \longmapsto (\lambda_a(b),\lambda_a(b)^{-1}\circ a\circ b)\in B\times B,\]
and $\rho_b(a)$ denotes the second coordinate in the image.

\begin{prop}\label{prop:solution1} Let $B=(B,+,\circ)$ be any skew brace.
\begin{enumerate}[$(i)$]
\item For any $a,b\in B$, we have
\[ 
r_B(a,b)=(b,a),\, r_B(b,a) = (a,b)
 \iff a*b =[a,b]_+ = b*a = 0.\]
 \item  
 For any $a,b\in B$, we have
\[ r_B^2(a,b) = (a,b),\, \lambda_a(b) =b  \iff 
a*b=[a,b]_+ = 0.\]
\item For $a,b\in B$ and $c = a\circ b\circ a^{-1}$, we have
\[ \rho_b(a) = a \iff c \circ a=a+c \iff \lambda^{\mathrm{\tiny op}}_{c}(a)=a.\]
\end{enumerate}
\end{prop}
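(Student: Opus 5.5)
The proof is a direct unwinding of the definition of $r_B$. Throughout I will use
\[ \lambda_a(b)=-a+a\circ b,\qquad \rho_b(a)=\lambda_a(b)^{-1}\circ a\circ b, \]
together with the reformulations
\[ a\ast b=0\iff a\circ b=a+b\iff \lambda_a(b)=b,\qquad [a,b]_+=0\iff a+b=b+a. \]
No earlier result of the paper is needed beyond these identities and the fact that $\lambda_a$ is a bijection.

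\emph{Part (i).} I will first note that $r_B(a,b)=(b,a)$ means $\lambda_a(b)=b$ and $\rho_b(a)=a$. The first condition is $a\ast b=0$. Given it, the second condition becomes $b^{-1}\circ a\circ b=a$, that is $a\circ b=b\circ a$. Symmetrically, $r_B(b,a)=(a,b)$ means $b\ast a=0$ and $b\circ a=a\circ b$.
\begin{itemize}
\item[$(\Rightarrow)$] Both conditions hold, so $a+b=a\circ b=b\circ a=b+a$. Hence $[a,b]_+=0$.
\item[$(\Leftarrow)$] From $a\ast b=b\ast a=[a,b]_+=0$ we get $a\circ b=a+b=b+a=b\circ a$. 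This yields all four coordinate equalities.
\end{itemize}

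\emph{Part (ii).} Assume $\lambda_a(b)=b$, i.e.\ $a\ast b=0$. Then $r_B(a,b)=(b,c)$ with $c=b^{-1}\circ a\circ b$, and $r_B^2(a,b)=r_B(b,c)$. Its first coordinate is
\[ \lambda_b(c)=-b+b\circ c=-b+a\circ b=-b+a+b, \]
using $a\circ b=a+b$. This equals $a$ exactly when $[a,b]_+=0$. When that holds, the second coordinate is automatic:
\[ \rho_c(b)=\lambda_b(c)^{-1}\circ b\circ c=a^{-1}\circ a\circ b=b. \]
So, given $\lambda_a(b)=b$, the condition $r_B^2(a,b)=(a,b)$ is equivalent to $[a,b]_+=0$. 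This gives the stated equivalence.

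\emph{Part (iii).} Put $c=a\circ b\circ a^{-1}$. Then $\rho_b(a)=a$ iff $\lambda_a(b)^{-1}\circ a\circ b=a$, iff $\lambda_a(b)=a\circ b\circ a^{-1}=c$. This reads $-a+a\circ b=c$, i.e.\ $a\circ b=a+c$. Since $a\circ b=c\circ a$ by the definition of $c$, this is $c\circ a=a+c$. Finally, $\lambda^{\mathrm{op}}_c(a)=c\circ a-c$, which equals $a$ iff $c\circ a=a+c$.

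\emph{Main obstacle.} Nothing is deep here. The only point needing care is part (ii): one must carry out the composite $r_B^2$ correctly, keeping the order of the non-commutative additions straight. The key observation is that once the first coordinate of $r_B^2(a,b)$ is forced to be $a$, the second coordinate equals $b$ automatically.
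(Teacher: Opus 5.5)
Your proposal is correct and follows the paper's proof essentially step for step. In (i) and (iii) you unwind $r_B$ exactly as the paper does, and in (ii) you likewise show that, given $\lambda_a(b)=b$, the first coordinate of $r_B^2(a,b)$ is $-b+a+b$ and the second is then automatically $b$. Your computation $\lambda_b(c)=-b+b\circ c=-b+a\circ b$ reaches $-b+a+b$ more directly than the paper's detour through $\rho_b(a)=b^{-1}+\lambda_{b^{-1}}(a)-b^{-1}$ and $\lambda_b(b^{-1})=-b$.
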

\begin{proof} For (i), observe that
\begin{align*} 
&r_B(a,b) = (b,a),\, r_B(b,a) = (a,b)\\
 &\hspace{1em} \iff \lambda_a(b) = b,\, \lambda_a(b)^{-1}\circ a\circ b = a,\, \lambda_b(a)=a,\, \lambda_b(a)^{-1}\circ b\circ a = b\\
&\hspace{1em} \iff a \ast b = 0,\, a\circ b =b\circ a,\, b\ast a=0\\
&\hspace{1em} \iff a \ast b = 0,\, [a,b]_+ = 0,\, b\ast a=0,
\end{align*}
where the last equivalence follows because $a\ast b=0$ and $b\ast a=0$ simply mean that $a\circ b = a+b$ and $b\circ a = b+a$, respectively.

\smallskip

For (ii), we may assume that $\lambda_a(b)=b$, which is equivalent to $a\ast b=0$. On the one hand, since $\lambda_a(b) = b$ and $a\circ b = a+b$, we see that
\[
\rho_b(a) = b^{-1} \circ (a+b) = (b^{-1}\circ a ) - b^{-1} = b^{-1} + \lambda_{b^{-1}}(a) - b^{-1}.\]
But $\lambda_b(b^{-1})=-b$, so this implies that 
\((\lambda_b\rho_b)(a) = -b + a +b.\)
On the other hand, we have $\rho_b(a) = b^{-1}\circ (a\circ b)$ as above, and so
\[ \rho_{\rho_b(a)}(b)  = (\lambda_b\rho_b)(a)^{-1}\circ b \circ \rho_b(a)=(\lambda_b\rho_b)(a)^{-1}\circ a\circ b,\]
which is $b$ when $(\lambda_b\rho_b)(a)=a$. Thus, assuming that $\lambda_a(b)=b$, we have
\begin{align*} 
r_B^2(a,b) = (a,b) 
&\iff (\lambda_{\lambda_a(b)}\rho_b)(a) = a,\, (\rho_{\rho_b(a)}\lambda_a)(b) = b\\
& \iff (\lambda_b\rho_b)(a) = a ,\, \rho_{\rho_b(a)}(b)=b\\
& \iff [a,b]_+ =0,
\end{align*}
where the last equivalence holds by the above observations.

\smallskip

For (iii), note that $a\circ b = c\circ a$, so then
\begin{align*}
\rho_b(a) = a & \iff \lambda_a(b)^{-1} \circ a\circ b = a\\
& \iff c \circ a= (-a + c \circ a)\circ a\\
& \iff a + c = c\circ a\\
& \iff \lambda^{\mathrm{\tiny op}}_{c}(a)=a,
\end{align*}
which is as desired.
\end{proof}

For any skew brace $B = (B,+,\circ)$, it is clear that
\begin{equation}\label{eqn:F}
\begin{cases}
\begin{aligned}
F_{1,W_{\operatorname{ann}}}(B)&= \Ann(B),&F_{1,W_{\operatorname{tri}}}(B)  &= \operatorname{Ker}(\lambda),\\
F_{1,W_{\operatorname{soc}}}(B)&= \Soc(B),&F_{1,W_{\operatorname{atri}}}(B)  &= \operatorname{Ker}(\lambda^{\operatorname{op}}),
    \end{aligned}
            \end{cases}
   \end{equation}
and for any $a\in B$, we have
\begin{equation}\label{eqn:C}
\begin{cases}
\begin{aligned}
    C_{W_{\operatorname{ann}}}(a;B) &= \Fix(\lambda_a) \cap C_{(B,+)}(a) \cap C_{(B,\circ)}(a),\\
C_{W_{\operatorname{soc}}}(a;B)  &= \Fix(\lambda_a)\cap C_{(B,+)}(a) ,\\
C_{W_{\operatorname{tri}}}(a;B) &= \Fix(\lambda_a),\\
C_{W_{\operatorname{atri}}}(a;B) &=\Fix(\lambda_a^{\operatorname{op}}).
\end{aligned}
\end{cases}
\end{equation}
Here $C_{G}(g)$ denotes the centraliser of $g$ in a group $G$, and $\mathrm{Fix}(\psi)$ denotes the set of fixed points for any permutation $\psi$.

\begin{theorem}\label{thm1} Let $W$ be any of the $W_{\operatorname{ann}},W_{\operatorname{soc}},W_{\operatorname{tri}}$, and $W_{\operatorname{atri}}$. Then $W$ satisfies \textnormal{Hypotheses \textnormal{\ref{assumption1}}} and \textnormal{\ref{assumption2}} for any finite skew brace $B=(B,+,\circ)$.
\end{theorem}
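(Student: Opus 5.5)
The plan is to verify Conditions (1)–(4) separately for each of the four collections, using the descriptions \eqref{eqn:F} and \eqref{eqn:C}. In every case we take $\bullet=\circ$. The operation $\diamond$ is chosen per collection: $\diamond=+$ for $W_{\operatorname{soc}},W_{\operatorname{tri}},W_{\operatorname{atri}}$, and $\diamond=\circ$ for $W_{\operatorname{ann}}$.

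\emph{Condition (1).} It is standard that $\Ann(B)$ and $\Soc(B)$ are ideals of $B$, so in particular they are normal subgroups of $(B,\circ)$. Since $\lambda$ and $\lambda^{\operatorname{op}}$ are homomorphisms from $(B,\circ)$, the groups $\operatorname{Ker}(\lambda)$ and $\operatorname{Ker}(\lambda^{\operatorname{op}})$ are also normal subgroups of $(B,\circ)$. We record this normality because it is needed for Condition (3).

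\emph{Condition (2).} Since $\lambda_a,\lambda_a^{\operatorname{op}}\in\mathrm{Aut}(B,+)$, both $\Fix(\lambda_a)$ and $\Fix(\lambda_a)\cap C_{(B,+)}(a)$ are subgroups of $(B,+)$. The same holds for $\Fix(\lambda_a^{\operatorname{op}})$. For $W_{\operatorname{ann}}$, note that $b\in C_{W_{\operatorname{ann}}}(a;B)$ means $a\circ b=a+b=b+a=b\circ a$. Because $B$ is finite, it suffices to check closure under $\circ$. Take $b,b'$ in this set. Skew distributivity gives
\[ a\circ b\circ b' = b\circ(a+b') = b\circ a-b+b\circ b' = a+b\circ b', \]
and similarly $b\circ b'\circ a = b\circ(b'+a)=b\circ b'+a$. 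Commutation in $(B,\circ)$ is clear. Hence all four defining equalities hold for $b\circ b'$.

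\emph{Conditions (3) and (4).} Both follow from one invariance statement. We show that for every $a\in B$, every $z\in F_{1,W}(B)$ and every $k\geq 1$,
\[ C_W(a;B)\subseteq C_W(a^{\circ k}\circ z;B). \]
Taking $k=1$ and applying the inclusion also to $a\circ z$ and $z^{-1}$ gives $C_W(a\circ z;B)=C_W(a;B)$, which yields (4).

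For (3), suppose $f_{1,W}(B)=q$ is prime. Then $(B,\circ)/F_{1,W}(B)$ is cyclic of order $q$, so for $a,a'\notin F_{1,W}(B)$ we can write $a'=a^{\circ k}\circ z$ and $a=a'^{\circ l}\circ z'$. The two inclusions then give equality.

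To prove the inclusion, the $z$-part is easy. If $z\in\operatorname{Ker}\lambda$ (or $\operatorname{Ker}\lambda^{\operatorname{op}}$), then $\lambda_{a\circ z}=\lambda_a$. If $z\in\Soc(B)$, then also $a\circ z=a+z$ with $z$ central in $(B,+)$, so $C_{(B,+)}(a\circ z)=C_{(B,+)}(a)$. For $W_{\operatorname{ann}}$, $z$ is moreover central in $(B,\circ)$.

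The power part is the main point. For $W_{\operatorname{tri}}$ and $W_{\operatorname{atri}}$ it is immediate, since $\lambda_{a^{\circ k}}=\lambda_a^k$ and $\Fix(\lambda_a)\subseteq\Fix(\lambda_a^k)$. For $W_{\operatorname{soc}}$, one must show that if $b$ is fixed by $\lambda_a$ and commutes additively with $a$, then $b$ commutes additively with $a^{\circ k}=a+\lambda_a(a^{\circ(k-1)})$. We argue by induction on $k$. The element $b=\lambda_a(b)$ commutes with $\lambda_a(a^{\circ(k-1)})$, because $\lambda_a$ is an additive automorphism and $b$ commutes with $a^{\circ(k-1)}$ by the inductive hypothesis. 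For $W_{\operatorname{ann}}$, $C_{W_{\operatorname{ann}}}(a;B)$ is a $\circ$-subgroup and the relation is symmetric, so $b\in C(a)$ iff $a\in C(b)$. Closure of $C(b)$ under $\circ$ (shown above) then gives $a^{\circ k}\in C(b)$, i.e.\ $b\in C(a^{\circ k})$.

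I expect the bookkeeping for $W_{\operatorname{soc}}$ and $W_{\operatorname{ann}}$ under powers to be the only delicate step; the other verifications are routine.
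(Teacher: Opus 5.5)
Your proof is correct in substance and follows a different route from the paper's for Conditions (2) and (3). One intermediate identity is false and needs a one-line repair.

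\textbf{Comparison with the paper.} For Condition (2) with $W_{\mathrm{ann}}$, the paper cites an external result that $C_{W_{\mathrm{ann}}}(a;B)$ is a subgroup of $(B,\circ)$. You instead verify $\circ$-closure directly from skew distributivity.

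For Condition (3), the paper writes the nontrivial cosets as $a^{i}\circ F_{1,W}(B)$ and uses primality case by case. For $W_{\mathrm{ann}}$, the chain $\Ann(B)\subseteq C_{W_{\mathrm{ann}}}(a^i;B)\subsetneq B$ together with prime index forces $C_{W_{\mathrm{ann}}}(a^i;B)=\Ann(B)$. For $W_{\mathrm{soc}}$, the inclusion $\Soc(B)\subseteq Z(B,+)$ with prime index forces $(B,+)/Z(B,+)$ to be cyclic. Hence $(B,+)$ is abelian, and everything reduces to $\Fix(\lambda_a)=\Fix(\lambda_a^i)$, as for $W_{\mathrm{tri}}$.

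You instead prove a monotonicity $C_W(a;B)\subseteq C_W(a^{\circ k};B)$ for all four collections, with no primality assumption. Both of your sub-arguments are sound: the induction via $a^{\circ k}=a+\lambda_a(a^{\circ(k-1)})$ for the socle, and symmetry plus $\circ$-closure for the annihilator. You then use primality only to make the quotient cyclic, which supplies inclusions in both directions.

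Your route is uniform and self-contained, and the monotonicity lemma holds in general. The paper's route is shorter and extracts extra structure in the prime case. For example, it gives $C_{W_{\mathrm{ann}}}(a;B)=\Ann(B)$ for every $a\notin \Ann(B)$, and it shows $(B,+)$ is abelian whenever $f_{1,W_{\mathrm{soc}}}(B)$ is prime.

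\textbf{The step to fix.} In the $z$-part for $W_{\mathrm{soc}}$ you assert $a\circ z=a+z$ for $z\in\Soc(B)$. This is false in general. Socle elements satisfy $z\circ a=z+a$, but $a\circ z=a+\lambda_a(z)$, and $\lambda_a$ need not fix $z$. In Example~\ref{ex:pq}, take $z=(1,0)^T\in\Soc(B)$ and $a=(0,1)^T$; then $a\circ z=(k,1)^T\neq(1,1)^T=a+z$.

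The conclusion $C_{(B,+)}(a\circ z)=C_{(B,+)}(a)$ still holds. Since $\lambda_a\in\mathrm{Aut}(B,+)$ preserves $Z(B,+)$, the element $\lambda_a(z)$ is still additively central. Alternatively, write $a\circ z=z'\circ a=z'+a$ with $z'=a\circ z\circ a^{-1}\in\Soc(B)$, using normality. With this repair, together with $\Soc(B)\subseteq\operatorname{Ker}(\lambda)$ giving $\lambda_{a\circ z}=\lambda_a$, your proof is complete. For $W_{\mathrm{ann}}$ the identity $a\circ z=a+z$ is genuinely true, because $a\ast z=0$ there.
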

\begin{proof} For Condition (1), it is clear from \eqref{eqn:F} that $F_{1,W}(B)$ is a subgroup, in fact a normal subgroup, of $(B,\circ)$. Thus, we take $\bullet$ to be $\circ$ in all cases.

\smallskip

 For Condition (2), it is known from \cite[Theorem 3.3]{colazzo} that $C_{W_{\mathrm{ann}}}(a;B)$ is a subgroup of $(B,\circ)$, and it is clear from \eqref{eqn:C} that $C_W(a;B)$ is subgroup of~$(B,+)$ when $W = W_{\mathrm{soc}},W_{\mathrm{tri}},W_{\mathrm{atri}}$, for all $a\in B$.

\smallskip

For Condition (4), it is also clear from \eqref{eqn:F} and \eqref{eqn:C}. In fact, we have a set equality $C_W(a;B) = C_W(a\circ z ;B)$ for all $a\in B$ and $z\in F_{1,W}(B)$.

\smallskip

For Condition (3), assume that $f_{1,W}(B)$ is a prime, $q$ say. Then
\[ B = F_{1,W}(B)\sqcup \bigsqcup _{i=1}^{q-1} a^i\circ F_{1,W}(B)\]
for some $a\in B$ because $F_{1,W}(B)$ is a normal subgroup of $(B,\circ)$. In view of the set equality in ~Condition (4), it suffices to show that
\begin{equation}\label{eqn:CW}
C_W(a;B) = C_W(a^i;B)
\end{equation}
for all $1\leq i\leq q-1$. For $W=W_{\operatorname{ann}}$, simply observe that
\[ F_{1,W_{\operatorname{ann}}}(B) \subseteq C_{W_{\operatorname{ann}}}(a^i;B)\subsetneq B ,\]
and the primality of $f_{1,W_{\operatorname{ann}}}(B)$ yields 
\[ C_{W_{\operatorname{ann}}}(a^i;B) = F_{1,W_{\operatorname{ann}}}(B),\]
so in particular \eqref{eqn:CW} holds. For $W=W_{\operatorname{tri}},W_{\operatorname{atri}}$, we clearly have
\begin{align*}
    C_{W_{\operatorname{tri}}}(a;B) & = \Fix(\lambda_a) = \Fix(\lambda_a^i) = C_{W_{\operatorname{tri}}}(a^i;B),\\
    C_{W_{\operatorname{atri}}}(a;B) & = \Fix(\lambda^{\operatorname{op}}_a) = \Fix((\lambda^{\operatorname{op}}_a)^i) = C_{W_{\operatorname{atri}}}(a^i;B).
\end{align*}
For $W=W_{\operatorname{soc}}$, first note that
\[ F_{1,W_{\operatorname{soc}}}(B) \subseteq Z(B,+) \subseteq B,\]
and the primality of $f_{1,W_{\operatorname{soc}}}(B)$ forces $(B,+)/Z(B,+)$ to be cyclic. But this implies that $(B,+)$ is abelian, and we see that
\[C_{W_{\operatorname{soc}}}(a;B)  = \Fix(\lambda_a) =  \Fix(\lambda_a^i) = C_{W_{\operatorname{soc}}}(a^i;B)\]
as above. This completes the proof.
\end{proof}


We may now apply the theorems in Section \ref{sec:general} to deduce  Theorems A, B, and C. Note that in the latter, we have written out the explicit values
\begin{align*}
\varphi(p,v)&= \frac{2p+1}{p(p+2)} ,\frac{p^2 + p-1}{p^3}, \frac{2p-1}{p^2} &&\hspace{-0.25cm}\mbox{for } v=p+2,p^2,p,\\
\varphi(2,v) & =  \frac{9}{16},\frac{7}{12},\frac{3}{5},\frac{5}{8},\frac{2}{3},\frac{3}{4}&&\hspace{-0.25cm}\mbox{for }v=8,6,5,4,3,2.
\end{align*}
We have also added the value $1$ because we assumed \eqref{eqn:assumption} in Section \ref{sec:general}. 

\begin{proof}[Proof of Theorem A] This follows from \eqref{eqn:prob equality}, and Theorems \ref{thm:Pb1} and \ref{thm1}.
\end{proof}

\begin{proof}[Proof of Theorem B] This follows from \eqref{eqn:prob equality}, and Theorems \ref{thm:Pb2} and \ref{thm1}.
\end{proof}

\begin{proof}[Proof of Theorem C] Clearly $W_{\mathrm{ann}}$ is symmetric on $B$, and $F_{1,W_{\mathrm{ann}}}(B)$ is a normal subgroup of both $(B,+)$ and $(B,\circ)$ for any skew brace $B = (B,+,\circ)$. The theorem now follows from \eqref{eqn:prob equality}, and Theorems \ref{thm:Pb3} and \ref{thm1}.
\end{proof}

In Theorems A, B, and C, we only listed the spectra of the probabilities for
simplicity. But from Theorems~\ref{thm:Pb1}, \ref{thm:Pb2}, and
\ref{thm:Pb3}, we know exactly when the discrete values occur, which we
summarise in the following tables. Here $B=(B,+,\circ)$ is the finite skew
brace such that $(Z,r)=(B,r_B)$, and $W$ is the collection associated to the
probability under consideration as specified by \eqref{eqn:prob equality}.

\begin{longtable}{|C{2cm}|C{9cm}|}
\hline
 Value & Condition\\
\hline\hline
 $\frac{2p-1}{p^2}$ & $f_{1,W}(B)=f_{2,W}(B)=p$ \\\hline
\end{longtable}
\vspace{-3mm}
{\footnotesize \begin{center}
    {\textsc Table A.} Conditions for the discrete value to occur in Theorem A
\end{center}}

\smallskip

\begin{longtable}{|C{2cm}|C{9cm}|}
\hline
 Value & Condition\\
\hline\hline
 $\frac{3}{5}$ & $\{f_{1,W}(B),f_{2,W}(B)\}=\{2,5\}$ \\\hline
  $\frac{5}{8}$ & $\begin{aligned}&f_{1,W}(B)=2,\, f_{2,W}(B)= 4,\mbox{ or }\\[-5pt] &f_{1,W}(B)=4,\, \forall a\in B\setminus F_{1,W}(B):c_W(a;B)=2\end{aligned}$ \\\hline
   $\frac{2}{3}$ & $\{f_{1,W}(B),f_{2,W}(B)\}=\{2,3\}$ \\\hline
    $\frac{3}{4}$ & $f_{1,W}(B)=f_{2,W}(B)=2$ \\\hline
\end{longtable}
\vspace{-3mm}{\footnotesize \begin{center}
    {\textsc Table B.} Conditions for the discrete values to occur in Theorem B
\end{center}}

\smallskip

\begin{longtable}{|C{2cm}|C{9cm}|}
\hline
 Value & Condition\\
\hline\hline
  $\frac{5}{8}$ &$f_{1,W}(B)=4,\, \forall a\in B\setminus F_{1,W}(B):c_W(a;B)=2$ \\\hline
    $\frac{3}{4}$ & $f_{1,W}(B)=2$ \\\hline
\end{longtable}
\vspace{-3mm}{\footnotesize \begin{center}
    {\textsc Table C.} Conditions for the discrete values to occur in Theorem C
\end{center}}

\section{Examples realising the discrete values}\label{sec:example}

We now provide explicit examples to show that all of the discrete values in Theorems A, B, and C are realised by some finite sb-solution. Clearly $1$ occurs for all four types of probabilities. For the other values, see:
\begin{longtable}{|C{5.5cm}|C{1.75cm}|C{1.5cm}|c|}
\hline
Probability type& Value & Example & Remark\\
\hline\hline
$\Pb_{\mathrm{flip},1},\Pb_{\mathrm{flip},2},\Pb_{\mathrm{left}},\Pb_{\mathrm{right}}$ & $\frac{2p-1}{p^2},\, \frac{3}{4}$ &\ref{ex:p2} &\\\hline
$\Pb_{\mathrm{flip},2},\Pb_{\mathrm{left}},\Pb_{\mathrm{right}}$ & $\frac{3}{5}$ & \ref{ex:pq} & $(p,q)=(5,2)$ \\\hline
$\Pb_{\mathrm{flip},1},\Pb_{\mathrm{flip},2},\Pb_{\mathrm{left}},\Pb_{\mathrm{right}}$& $\frac{5}{8}$ & \ref{ex:p3} & $p=2$ \\\hline
$\Pb_{\mathrm{flip},2},\Pb_{\mathrm{left}},\Pb_{\mathrm{right}}$ & $\frac{2}{3}$ & \ref{ex:pq} & $(p,q)=(3,2)$ \\\hline
\end{longtable}
\noindent Let $B = (B,+,\circ)$ be a finite skew brace. In view of \eqref{eqn:prob equality}, we may compute the four probabilities of $(B,r_B)$ of interest as $\Pb_W(B)$, where $W$ is among the collections $W_{\mathrm{ann}},W_{\mathrm{soc}},W_{\mathrm{tri}},W_{\mathrm{atri}}$ as given by \eqref{eqn:prob equality}. By Theorem \ref{thm1}, in the computation, we may apply \eqref{eqn:prime} provided that $f_{1,W}(B)$ is a prime, and we may freely use \eqref{eqn:Pb3}. In the proof of Theorem \ref{thm1}, the operation $\bullet$ in Hypothesis \ref{assumption1} is taken to be $\circ$, so $0,a_1,\dots,a_{f_{1,W}(B)-1}$ in \eqref{eqn:Pb3} is a left transversal of $F_{1,W}(B)$ in~$(B,\circ)$. The following observations are obvious:
\begin{enumerate}[(I)]
\item If $(B,+)$ is abelian, then all of the sets in Definition \ref{defn:W} are the same for $W=W_{\mathrm{soc}},W_{\mathrm{tri}},W_{\mathrm{atri}}$, so in particular
\[\Pb_{W_{\mathrm{soc}}}(B) = \Pb_{W_{\mathrm{tri}}}(B) = \Pb_{W_{\mathrm{atri}}}(B).  \]
\item If $(B,\circ)$ is abelian, then all of the sets in Definition \ref{defn:W} are the same for $W=W_{\mathrm{ann}},W_{\mathrm{soc}}$, and we have
\[
\forall a,b\in B: (a,b) \in W_{\mathrm{tri}}(B,B) \iff (b,a)\in W_{\mathrm{atri}}(B,B),\]
so in particular
\[ \Pb_{W_{\mathrm{ann}}}(B)=\Pb_{W_{\mathrm{soc}}}(B),\quad \Pb_{W_{\mathrm{tri}}}(B) = \Pb_{W_{\mathrm{atri}}}(B).  \]
\end{enumerate}
Thus, if $(B,+)$ and $(B,\circ)$ are both abelian, then the sets in Definition \ref{defn:W} for all four families are the same, and the four probabilities are all equal.

\smallskip

In what follows, let $W$ be any of $W_{\mathrm{ann}},W_{\mathrm{soc}},W_{\mathrm{tri}}$ and $W_{\mathrm{atri}}$.

\begin{example}\label{ex:p2}Let $p$ be any prime, and consider the brace $B = (\mathbb{F}_p^2,+,\circ)$, where $+$ is the usual addition, and 
\[ \begin{pmatrix}a_1\\a_2\end{pmatrix}\circ \begin{pmatrix}b_1 \\ b_2\end{pmatrix} = \begin{pmatrix} {a_1} + b_1 + a_2b_2\\a_2+b_2\end{pmatrix}\]
(cf. \cite{bachiller}). We are in both situations (I) and (II). We easily see that
\[ F_{1,W}(B) = F_{2,W}(B) = \mathbb{F}_p\times \{0\},\]
which has prime index $p$ in $B$. It then follows from \eqref{eqn:prime} that
\[ \Pb_W(B) = \varphi(f_{2,W}(B),f_{1,W}(B)) = \varphi(p,p)= \frac{2p-1}{p^2},\]
and in particular $\Pb_W(B) = \frac{3}{4}$ when $p=2$.
\end{example}

\begin{example}\label{ex:p3}Let $p$ be any prime, and consider the brace $B = (\mathbb{F}_p^3,+,\circ)$, where $+$ is the usual addition, and 
\[\begin{pmatrix}
    a_1\\a_2\\a_3
\end{pmatrix}\circ \begin{pmatrix}b_1\\b_2\\b_3\end{pmatrix}
= \begin{pmatrix} 
a_1 + b_1 + \lvert \begin{smallmatrix}a_2&b_2\\a_3&b_3\end{smallmatrix} \rvert\\
a_2 + b_2 \\
a_3 + b_3\\
\end{pmatrix}
\]  
(cf. \cite{bachiller}). We are in situation (I). Since
\[ \forall a,b\in B: a \ast b = -(b\ast a),\]
although $(B,\circ)$ is not abelian, we still have that all of the sets in Definition~\ref{defn:W} are the same for $W=W_{\mathrm{ann}},W_{\mathrm{soc}}$. We easily see that
\[ F_{1,W}(B) = F_{2,W}(B) = \mathbb{F}_p\times \{0\}\times \{0\}.\]
Note that $\{0\}\times\mathbb{F}_p\times\mathbb{F}_p$ is a left transversal of $F_{1,W}(B)$ in $(B,\circ)$. Let $R$ be this set with the zero vector removed. It then follows from \eqref{eqn:Pb3} that
\[ \Pb_W(B) = \frac{1}{f_{1,W}(B)} + \frac{1}{f_{1,W}(B)} \sum_{a\in R} \frac{1}{c_W(a;B)} = \frac{1}{p^2} + \frac{1}{p^2}\sum_{a\in R}\frac{1}{c_W(a;B)}.\]
For each $a=(0,a_2,a_3)^T\in R$ and
$b=(b_1,b_2,b_3)^T\in B$, observe that
\begin{align*}
b\in C_W(a;B)
\iff a\ast b=0
\iff \left\lvert\begin{smallmatrix}a_2&b_2\\a_3&b_3 \end{smallmatrix}\right\rvert=0
\iff \left(\begin{smallmatrix}b_2\\b_3\end{smallmatrix}\right)\in
\left\langle\Big(\begin{smallmatrix}a_2\\a_3\end{smallmatrix}\Big)\right\rangle .
\end{align*}
Since $(a_2,a_3)\neq (0,0)$, we see that
\[ c_{W}(a;B)=\frac{|B|}{|C_{W}(a;B)|} = \frac{p^3}{p^2} = p.\]
Therefore, we deduce that
\[ \Pb_W(B) = \frac{1}{p^2} + \frac{1}{p^2}(p^2-1)\frac{1}{p} = \frac{p^2+p-1}{p^3},\] so in particular $\Pb_W(B) = \frac{5}{8}$ when $p=2$.
\end{example}

\begin{example}\label{ex:pq} 
Let $p,q$ be primes such that $p\equiv 1\pmod{q}$, and consider the brace $B = (\mathbb{F}_p\times\mathbb{F}_q,+,\circ)$, where $+$ is the usual addition, and
\[ \begin{pmatrix}a_1\\a_2\end{pmatrix}\circ \begin{pmatrix}b_1 \\ b_2\end{pmatrix} = \begin{pmatrix} {a_1} + k^{a_2}b_1 \\a_2+b_2\end{pmatrix}\]
(cf. \cite{pq}). In particular, we have
\[\begin{pmatrix}a_1\\a_2\end{pmatrix}\ast\begin{pmatrix}b_1 \\ b_2\end{pmatrix} = \begin{pmatrix} (k^{a_2}-1) b_1\\0
\end{pmatrix}.\]
Here $k\in \mathbb{F}_p^\times$ is a fixed element of order $q$. We are in situation (I).

\smallskip

For $W = W_{\operatorname{ann}}$, we have that 
\[ C_W\left( \left(\begin{smallmatrix}a_1\\a_2\end{smallmatrix}\right);B\right) = \begin{cases}
 \mathbb{F}_p\times \mathbb{F}_q&\mbox{when }a_1,a_2=0,\\
 \{0\}\times \mathbb{F}_q &\mbox{when $a_1=0,a_2\neq 0$},\\
   \mathbb{F}_p\times \{0\} &\mbox{when $a_1\neq 0,a_2=0$},\\
   \{0\}\times \{0\} &\mbox{when $a_1,a_2\neq 0$}.
\end{cases}\]
Therefore, we deduce from the definition that
\begin{align*}
\Pb_W(B)& = \frac{1}{|B|^2}\sum_{a\in B} |C_W(a;B)|\\
&= \frac{pq+(q-1)q+(p-1)p+(q-1)(p-1)}{p^2q^2},
\end{align*}
so in particular $\Pb_W(B)=\frac{4}{9},\frac{9}{25}$ when $(p,q)=(3,2),(5,2)$, respectively.

\smallskip

For $W =W_{\operatorname{soc}},W_{\operatorname{tri}},W_{\operatorname{atri}}$, we easily see that
\begin{align*} F_{1,W}(B)  = \mathbb{F}_p\times \{0\},\quad 
F_{2,W}(B)  =\{0\} \times \mathbb{F}_q,
\end{align*}
and the former has prime index $q$ in $B$.
It then follows from \eqref{eqn:prime} that
\[ \Pb_W(B) = \varphi(f_{2,W}(B),f_{1,W}(B)) = \varphi(p,q) = \frac{p+q-1}{pq},\]
so in particular $\Pb_W(B) = \frac{2}{3},\frac{3}{5}$ when $(p,q)=(3,2),(5,2)$, respectively. 
\end{example}

Example \ref{ex:pq} above also shows that the ratios
\begin{equation}\label{eqn:ratio1} \frac{\Pb_{W_{\operatorname{ann}}}(B)}{\Pb_{W_{\operatorname{soc}}}(B)},\quad \frac{\Pb_{W_{\operatorname{ann}}}(B)}{\Pb_{W_{\operatorname{tri}}}(B)},\quad \frac{\Pb_{W_{\operatorname{ann}}}(B)}{\Pb_{W_{\operatorname{atri}}}(B)}\end{equation}
can be very small, while Example \ref{ex:final} below shows that the ratios
\begin{equation}\label{eqn:ratio2} \frac{\Pb_{W_{\operatorname{soc}}}(B)}{\Pb_{W_{\operatorname{tri}}}(B)}
,\quad\frac{\Pb_{W_{\operatorname{soc}}}(B)}{\Pb_{W_{\operatorname{atri}}}(B)}\end{equation}
can also be very small. The ratios in \eqref{eqn:ratio1} and \eqref{eqn:ratio2}, respectively, have the same value in Examples \ref{ex:pq} and \ref{ex:final}. Both values are equal to
\[ \frac{pq + (q-1)q+(p-1)p+(q-1)(p-1)}{pq(p+q-1)}
,\]
which tends to $\frac{1}{q}$ as $p\rightarrow\infty$ when $q$ is fixed. Note that $p\rightarrow\infty$ makes sense, because $p\equiv 1\pmod{q}$ has infinitely many prime solutions $p$ by Dirichlet’s Theorem on Arithmetic Progressions.

\begin{example}\label{ex:final}
Let $p,q$ be primes such that $p\equiv 1\pmod{q}$, and consider the skew brace $B = (\mathbb{F}_p\times \mathbb{F}_q,+,\circ)$, where
\[
    \begin{pmatrix}a_1\\a_2\end{pmatrix} 
    +\begin{pmatrix}b_1\\b_2\end{pmatrix} =\begin{pmatrix} a_1 + k^{a_2}b_1\\
    a_2+b_2
    \end{pmatrix},\quad
    \begin{pmatrix}a_1\\a_2\end{pmatrix} \circ\begin{pmatrix}b_1\\b_2\end{pmatrix}= \begin{pmatrix}
        k^{b_2}a_1 + k^{a_2}b_1\\a_2+b_2
    \end{pmatrix}
\]
(cf. \cite{pq}). In particular, we have
\begin{align*}\begin{pmatrix}a_1\\a_2\end{pmatrix}\ast\begin{pmatrix}b_1 \\ b_2\end{pmatrix} &= \begin{pmatrix} k^{-a_2}(k^{b_2}-1) a_1\\0
\end{pmatrix},\\
\left[\begin{pmatrix}a_1\\a_2\end{pmatrix},\begin{pmatrix}b_1 \\ b_2\end{pmatrix}\right]_+ & = \begin{pmatrix}
    (1-k^{b_2})a_1 + (k^{a_2}-1)b_1\\0
\end{pmatrix}.\end{align*}
Here $k\in \mathbb{F}_p^\times$ is a fixed element of order $q$. We are in situation (II).

\smallskip

 For $W=W_{\operatorname{ann}},W_{\operatorname{soc}}$, observe that 
\[ C_W\left( \left(\begin{smallmatrix}a_1\\a_2\end{smallmatrix}\right);B\right) = \begin{cases}
 \mathbb{F}_p\times \mathbb{F}_q&\mbox{when }a_1,a_2=0,\\
 \{0\}\times \mathbb{F}_q &\mbox{when $a_1=0,a_2\neq 0$},\\
   \mathbb{F}_p\times \{0\} &\mbox{when $a_1\neq 0,a_2=0$},\\
   \{0\}\times \{0\} &\mbox{when $a_1,a_2\neq 0$}.
\end{cases}\]
Therefore, we deduce from the definition that
\begin{align*}
\Pb_W(B)& = \frac{1}{|B|^2}\sum_{a\in B} |C_W(a;B)|\\
&= \frac{pq+(q-1)q+(p-1)p+(q-1)(p-1)}{p^2q^2},
\end{align*}
which is equal to the first probability computed in Example \ref{ex:pq}.

\smallskip

For $W =W_{\operatorname{tri}},W_{\operatorname{atri}}$, we easily see that
\begin{align*}
     F_{1,W_{\mathrm{tri}}}(B) &= F_{2,W_{\mathrm{atri}}}(B) =  \{0\} \times \mathbb{F}_q,\\
     F_{2,W_{\mathrm{tri}}}(B) & = F_{1,W_{\mathrm{atri}}}(B) = \mathbb{F}_p\times\{0\}. 
\end{align*}
Since $F_{1,W}(B)$ has prime index in $B$ in both cases, and $\varphi$ is symmetric, it then follows from \eqref{eqn:prime} that
\[ \Pb_W(B) = \varphi(f_{2,W}(B),f_{1,W}(B)) = \varphi(p,q) = \frac{p+q-1}{pq},\]
which is equal to the second probability computed in Example \ref{ex:pq}.
\end{example}

\section{A probability for indecomposable components}\label{sec:indecomp}
\label{sec:indecomposable-components-probability}

Let $(Z,r)$ be a solution. A {\it decomposition of $Z$} is a partition 
\[ Z=X\sqcup Y\mbox{ such that }\begin{cases}
\begin{aligned}
r(X\times X) &= X\times X,&r(X\times Y) &= Y\times X,\\
r(Y\times Y) &= Y\times Y,&r(Y\times X) &= X\times Y.
\end{aligned}
\end{cases}\]
It is easy to check that $(X,r|_{X\times X})$ and $(Y,r|_{Y\times Y})$ are also solutions in this case. We say that $(Z,r)$ is {\it decomposable} if it admits such a decomposition, and {\it indecomposable} otherwise. Recall that the {\it permutation group of $Z$} is 
\[ 
\mathcal G(Z,r)=\langle \lambda_z,\rho_z\mid z\in Z\rangle,
 \]
defined as a subgroup of $\mathrm{Sym}(Z)$. The following proposition is well-known, and the proof is also straightforward. Thus $(Z,r)$ is indecomposable if and only if $\mathcal{G}(Z,r)$ acts transitively on $Z$.

\begin{prop}\label{prop:decomposition}Let $(Z,r)$ be a solution. A partition $Z=X\sqcup Y$ of $Z$ is a decomposition  if and only if both $X,\, Y$ are unions of orbits of $\mathcal{G}(Z,r)$.
\end{prop}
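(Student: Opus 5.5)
We prove the two implications separately, using only the definition of a decomposition and the fact that, by non-degeneracy, each $\lambda_z$ and $\rho_z$ is a bijection of the finite set $Z$. Write $r(x,y)=(\lambda_x(y),\rho_y(x))$ throughout.

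\emph{Decomposition implies unions of orbits.} Suppose $Z=X\sqcup Y$ is a decomposition. We first show $\lambda_z(X)\subseteq X$ for every $z\in Z$.
\begin{itemize}
\item If $z\in X$ and $x\in X$, then $r(z,x)\in X\times X$, so $\lambda_z(x)\in X$.
\item If $z\in Y$ and $x\in X$, then $r(z,x)\in r(Y\times X)=X\times Y$, so again $\lambda_z(x)\in X$.
\end{itemize}
Hence $\lambda_z(X)\subseteq X$. Since $\lambda_z$ is a bijection of the finite set $Z$, this forces $\lambda_z(X)=X$. Taking complements gives $\lambda_z(Y)=Y$.

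The same argument applies to $\rho_z$, now reading the second coordinate. For $x\in X$:
\begin{itemize}
\item if $z\in X$, then $r(x,z)\in X\times X$, so $\rho_z(x)\in X$;
\item if $z\in Y$, then $r(x,z)\in r(X\times Y)=Y\times X$, so $\rho_z(x)\in X$.
\end{itemize}
Thus $\rho_z(X)=X$ and $\rho_z(Y)=Y$. Since every generator of $\mathcal G(Z,r)$ stabilises $X$ and $Y$, the whole group does, so $X$ and $Y$ are unions of $\mathcal G(Z,r)$-orbits.

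\emph{Unions of orbits implies decomposition.} Suppose $X$ and $Y$ are unions of orbits. Then every $\lambda_z$ and $\rho_z$ preserves both $X$ and $Y$. Reading off the formula for $r$ coordinate by coordinate:
\begin{itemize}
\item for $x,x'\in X$, we have $\lambda_x(x')\in X$ and $\rho_{x'}(x)\in X$, so $r(X\times X)\subseteq X\times X$;
\item for $x\in X$, $y\in Y$, we have $\lambda_x(y)\in Y$ and $\rho_y(x)\in X$, so $r(X\times Y)\subseteq Y\times X$;
\item symmetrically, $r(Y\times X)\subseteq X\times Y$ and $r(Y\times Y)\subseteq Y\times Y$.
\end{itemize}

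These four inclusions must be upgraded to equalities. The sets $X\times X$, $X\times Y$, $Y\times X$, $Y\times Y$ partition $Z\times Z$, and $r$ is a bijection of $Z\times Z$. So $r$ sends each block injectively into a block, and distinct blocks go to distinct blocks.
\begin{itemize}
\item $X\times X$ and $Y\times Y$ are mapped into themselves, and $r$ is injective, so a cardinality count gives equality for both.
\item $X\times Y$ and $Y\times X$ have the same cardinality $|X||Y|$ and are mapped injectively into each other, so both inclusions are equalities.
\end{itemize}
Hence $Z=X\sqcup Y$ is a decomposition. The only step needing care is the cardinality argument in both directions, which uses the finiteness of $Z$.
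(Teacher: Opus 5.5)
The paper gives no proof of this proposition (it calls it well known and straightforward). Your argument is the standard one, and it is correct whenever $Z$ is finite, which covers every later use in the paper.

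However, the proposition is stated for an arbitrary solution, and the paper's notion of solution allows $Z$ to be infinite. You assume finiteness (``the finite set $Z$''), and both of your upgrading steps are counting arguments that fail for infinite sets. A bijection can map a subset properly into itself: $n\mapsto n+1$ on $\mathbb{Z}$ maps $\mathbb{N}$ into, but not onto, $\mathbb{N}$. Likewise, an injection between equipotent infinite sets need not be onto. In the first direction this matters, because without $\lambda_z(X)=X$ you only get invariance of $X$ under the generators of $\mathcal{G}(Z,r)$, not under their inverses.

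Both steps can be repaired by using surjectivity instead of counting.

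In the first direction, also check that $\lambda_z(Y)\subseteq Y$ and $\rho_z(Y)\subseteq Y$ for all $z$. For $y\in Y$, the first coordinate of $r(z,y)$ lies in $Y$ because $r(X\times Y)=Y\times X$ and $r(Y\times Y)=Y\times Y$. The second coordinate of $r(y,z)$ lies in $Y$ because $r(Y\times X)=X\times Y$ and $r(Y\times Y)=Y\times Y$. A surjection of $Z=X\sqcup Y$ that maps $X$ into $X$ and $Y$ into $Y$ maps each onto itself, so $\lambda_z(X)=X$, $\rho_z(X)=X$, and so on.

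In the second direction, suppose $(a,b)\in X\times X$ and $(a,b)=r(c,d)$. Then $(c,d)$ cannot lie in $X\times Y$, $Y\times X$ or $Y\times Y$. These blocks are sent into $Y\times X$, $X\times Y$ and $Y\times Y$ respectively, each disjoint from $X\times X$. Hence $X\times X\subseteq r(X\times X)$, and the same argument gives the other three equalities.

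With these two changes your proof covers the proposition in full generality.
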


For example, let $Z$ be a non-empty set. Given any $\sigma\in \mathrm{Sym}(Z)$, the map
\[ r_\sigma : (x,y) \in Z\times Z \longmapsto (\sigma(y),\sigma^{-1}(x))\in Z\times Z\]
is a solution with $\lambda_x=\sigma,\, \rho_y=\sigma^{-1}$ for all $x,y\in Z$. Then $\mathcal{G}(Z,r_\sigma) = \langle\sigma\rangle$, and $(Z,r_\sigma)$ is indecomposable if and only if $\sigma$ acts transitively on $Z$.


\smallskip

Let us introduce a new probability.

\begin{definition}
Let $(Z,r)$ be a finite solution, and let $X,Y$ be non-empty
subsets of $Z$. In the notation of Definition \ref{defn:W}, we define
\[
        \Pb_{\operatorname{ic}}(X,Y,r)
        =
        \Pb_{W_{\operatorname{ic}}}(X,Y),
\]
where 
\[
        W_{\operatorname{ic}}=\{\,x\mbox{ and $y$ belong to the same orbit of $\mathcal{G}(Z,r)$}\,\}.
\]
From Proposition \ref{prop:decomposition}, it is obvious that
\begin{align*}
Z = X\sqcup Y \mbox{ is a decomposition} &\iff  \Pb_{\mathrm{ic}}(X,Y,r) =0,
\end{align*}
and
\begin{align}\label{eqn:XY}
Z = X\sqcup Y\mbox{ is not a decomposition} &\iff \Pb_{\operatorname{ic}}(X,Y,r) \geq \frac{1}{|X||Y|},
\end{align}
so this probability measures how close $Z = X\cup Y$ is from being a decomposition. For $X=Y=Z$, we write
\[         \Pb_{\operatorname{ic}}(Z,r)
        =\Pb_{\operatorname{ic}}(Z,Z,r).\]
Again from Proposition \ref{prop:decomposition}, it is obvious that
\[ Z \mbox{ is indecomposable} \iff  \Pb_{\mathrm{ic}}(Z,r) =1,\]
so this probability measures how far $(Z,r)$ is from being indecomposable.
\end{definition}

We first give an example to show that the bound $\frac{1}{|X||Y|}$ in \eqref{eqn:XY} is sharp even when we restrict $X,Y$ to partitions $Z=X\sqcup Y$ of $Z$, and when we fix the sizes $|X|,|Y|$ in advance.

\begin{example}
\label{ex:ic-relative-gap-sharp}
Let $n_1$ and $n_2$ be any natural numbers. Let $Z$ be any finite set of size $n = n_1 + n_2$, and write
\[ Z = \{x_1,\dots,x_{n_1-1},u,y_1,\dots,y_{n_2-1},v\}.\]
Take $\sigma = (u,v)\in \mathrm{Sym}(Z)$ and consider $(Z,r_\sigma)$. Then
\[ Z=X\sqcup Y,\quad\mbox{for } X = \{x_1,\dots,x_{n_1-1},u\},\, Y = \{y_{1},\dots,y_{n_2-1},v\},\]
is a partition of $Z$ such that $|X| = n_1,\, |Y| = n_2$, and
\[ W_{\operatorname{ic}}(X,Y,r_\sigma) = \{(u,v)\},\quad \Pb_{\operatorname{ic}}(X,Y,r_\sigma) = \frac{1}{|X||Y|}.\]
Thus, the bound in \eqref{eqn:XY} is sharp for any sizes $|X|,\, |Y|$.
\end{example}

Next, we give estimates for the probability when $X=Y=Z$. We will use the following general combinatorial fact about partitions.

\begin{lem}\label{lem:partition-square-bounds} Let $n\in \mathbb{N}$, and let $n = n_1+ \cdots + n_m$ be a partition of $n$ into positive integers. Write
$n=mq+s$, with $0\leq s<m$. Then
\begin{equation}\label{eqn:partition-square-bounds}
        (m-s)q^2+s(q+1)^2
        \leq
        \sum_{i=1}^m n_i^2
        \leq
        (m-1)+(n-m+1)^2.
\end{equation}
Without loss of generality, assume that $n_1\leq \cdots \leq n_m$. Then:
\begin{enumerate}[$(a)$]
\item The left equality holds if and only if 
\[ n_1=\cdots = n_{m-s} = q,\quad n_{m-s+1} =\cdots =n_m = q+1.\]
\item The right equality holds if and only if
\[ n_1 = \cdots = n_{m-1} = 1,\quad n_{m} = n-m+1.\]
\end{enumerate}
\end{lem}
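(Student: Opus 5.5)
Both bounds will come from elementary exchange (smoothing) arguments on the multiset $\{n_1,\dots,n_m\}$ of positive integers summing to $n$. Since the set of such partitions with exactly $m$ parts is finite, the sum $S=\sum_i n_i^2$ attains a minimum and a maximum. It then suffices to show two things: any partition not of the shape described in (a) (respectively (b)) can be modified so that $S$ strictly decreases (respectively increases), and the two extremal shapes give exactly the stated values. This proves the inequalities in \eqref{eqn:partition-square-bounds} together with the equality characterisations, because each extremal shape is unique once the parts are ordered increasingly.

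For the lower bound and (a), suppose $n_j\geq n_i+2$ for some $i,j$. Replace $(n_i,n_j)$ by $(n_i+1,n_j-1)$. This keeps every part positive and the total equal to $n$, and changes $S$ by
\[(n_i+1)^2+(n_j-1)^2-n_i^2-n_j^2=2(n_i-n_j+1)\leq -2<0.\]
Hence at a minimiser all parts differ pairwise by at most $1$, so they take values in $\{t,t+1\}$ for some $t$. Let $k$ be the number of parts equal to $t+1$, with $0\leq k<m$ after possibly replacing $t$ by $t+1$ when all parts equal $t+1$. Then $n=mt+k$, and uniqueness of division with remainder gives $t=q$ and $k=s$. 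So the minimiser is exactly the shape in (a), with value $(m-s)q^2+s(q+1)^2$. Any other partition is not of this form, so it admits the strictly decreasing move and therefore has strictly larger $S$.

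For the upper bound and (b), suppose two parts satisfy $2\leq n_i\leq n_j$. Replace $(n_i,n_j)$ by $(n_i-1,n_j+1)$. This keeps every part positive, and changes $S$ by $2(n_j-n_i+1)\geq 2>0$. Thus at a maximiser at most one part exceeds $1$, which forces $n_1=\cdots=n_{m-1}=1$ and $n_m=n-m+1$. The corresponding value is $(m-1)+(n-m+1)^2$. Any other partition has at least two parts $\geq 2$, so it admits the strictly increasing move and has strictly smaller $S$.

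None of the steps is a real obstacle. The only point needing care is the bookkeeping in (a): identifying the balanced configuration with the division $n=mq+s$, and in particular handling the edge case where all parts are equal ($s=0$) consistently with the convention $0\leq s<m$.
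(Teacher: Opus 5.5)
Your proposal is correct and follows essentially the same exchange argument as the paper: shifting a unit from $n_j$ to $n_i$ when $n_j\geq n_i+2$ strictly decreases $\sum n_i^2$, and shifting a unit from a part $n_i\geq 2$ to a part $n_j\geq n_i$ strictly increases it, which forces the minimiser and maximiser into the shapes (a) and (b). Your explicit remarks on the existence of extrema (by finiteness), the positivity of the modified parts, and the $s=0$ bookkeeping make explicit what the paper's proof leaves implicit.
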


\begin{proof} For any partition $n=n_1+\cdots +n_m$ of $n$, define
\[ S(n_1,\dots,n_m) = \sum_{i=1}^m n_i^2.\]
We want to find the minimum and maximum of this sum, subject to \begin{equation}\label{eqn:condition}
 n_1 + \cdots + n_m = mq + s.
 \end{equation}
They will correspond, respectively, to the lower and upper bounds.

\smallskip

First, let $\mathbf{n}_{\mbox{\tiny min}}=(n_1,\ldots,n_m)$ be a partition of $n$ attaining the minimum. If $n_{j} - n_i\geq 2$ for some $i\neq j$, then for the partition $\mathbf{n}$ obtained from $\mathbf{n}_{\mbox{\tiny min}}$ by replacing $n_i$ with $n_i+1$ and $n_j$ with $n_j-1$, we have
\begin{align*}
S(\mathbf{n}_{\mbox{\tiny min}}) - S(\mathbf{n})
&= n_i^2 + n_{j}^2 - (n_i+1)^2 - (n_{j}-1)^2\\
&= 2(n_{j} -n_i-1)>0.
\end{align*}
This contradicts the minimality of $S(\mathbf{n}_{\mbox{\tiny min}})$. Thus, at the minimum, one has $|n_{j} - n_i|=0,1$ for all $i,j$. By \eqref{eqn:condition}, this is equivalent to the condition in (a) when we assume $n_1\leq \cdots\leq n_m$, and the lower bound follows.

\smallskip

Next, let $\mathbf{n}_{\mbox{\tiny max}} = (n_1,\ldots,n_m)$ be a partition of $n$ attaining the maximum. If $n_i,n_j \geq 2$ for some $i\neq j$, say $n_j \geq n_i$, then for the partition $\mathbf{n}$ obtained from $\mathbf{n}_{\mbox{\tiny max}}$ by replacing $n_i$ by $n_{i}-1$ and $n_{j}$ by $n_{j}+1$, we have
\begin{align*}
 S(\mathbf{n}_{\mbox{\tiny max}}) - S(\mathbf{n})
&= n_i^2 + n_{j}^2 - (n_i-1)^2 - (n_{j}+1)^2\\
&= 2(n_i-n_{j} -1)<0.
 \end{align*}
This contradicts the maximality of $S(\mathbf{n}_{\mbox{\tiny max}})$. Thus, at the maximum, one has $n_i=1$ for all but one $i$. By \eqref{eqn:condition}, this is equivalent to the condition in~(b) when we assume $n_1\leq \cdots \leq n_m$, and the upper bound follows.
\end{proof}

As an immediate application, we deduce the following result.

\begin{theorem}
\label{thm:component-bounds}
Let $(Z,r)$ be a finite solution of size $n$, and let
\[
        Z=O_1\sqcup\cdots\sqcup O_m
\]
be its decomposition into indecomposable components, i.e. the orbits under the action of $\mathcal{G}(Z,r)$. Write $n=mq+s$ with $0\leq s<m$. Then 
\[
\frac{(m-s)q^2+s(q+1)^2}{n^2}
 \leq\Pb_{\operatorname{ic}}(Z,r)\leq\frac{(m-1)+(n-m+1)^2}{n^2}.
\]
Moreover, without loss of generality, assume that $| O_1| \leq \ldots \leq |O_m|$. Then:
\begin{enumerate}[$(a)$]
\item The left equality holds if and only if 
\[|O_1|=\ldots =|O_{m-s}|=q,\quad |O_{m-s+1}| = \ldots =|O_m|=q+1.\]
\item The right equality holds if and only if
\[ |O_1|=\ldots =|O_{m-1}| =1,\quad |O_m| = n-m+1.\]
\end{enumerate}
\end{theorem}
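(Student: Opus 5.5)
The plan is to compute $\Pb_{\operatorname{ic}}(Z,r)$ exactly in terms of the orbit sizes and then invoke Lemma~\ref{lem:partition-square-bounds}.

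First, by definition, a pair $(x,y)\in Z\times Z$ lies in $W_{\operatorname{ic}}(Z,Z)$ precisely when $x$ and $y$ lie in the same orbit $O_i$ of $\mathcal{G}(Z,r)$. The orbits partition $Z$, so these pairs form the disjoint union
\[
W_{\operatorname{ic}}(Z,Z)=\bigsqcup_{i=1}^m O_i\times O_i .
\]
Writing $n_i=|O_i|$, this gives
\[
\Pb_{\operatorname{ic}}(Z,r)=\frac{1}{n^2}\sum_{i=1}^m n_i^2 .
\]
By Proposition~\ref{prop:decomposition}, the orbits are exactly the indecomposable components.

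Next, the orbits are non-empty and cover $Z$. Hence $n=n_1+\cdots+n_m$ is a partition of $n$ into $m$ positive integers, and after reordering we may assume $n_1\le\cdots\le n_m$. Applying Lemma~\ref{lem:partition-square-bounds} with $n=mq+s$ bounds $\sum_i n_i^2$ between $(m-s)q^2+s(q+1)^2$ and $(m-1)+(n-m+1)^2$. Dividing by $n^2$ yields the two stated inequalities.

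The equality characterisations (a) and (b) transfer verbatim from parts (a) and (b) of the lemma. This is because $\Pb_{\operatorname{ic}}(Z,r)$ attains a bound exactly when $\sum_i n_i^2$ does.

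There is no real obstacle here. The only point needing care is the identification of $W_{\operatorname{ic}}(Z,Z)$ with the disjoint union $\bigsqcup_i O_i\times O_i$, which uses that the orbits of $\mathcal{G}(Z,r)$ form a partition of $Z$.
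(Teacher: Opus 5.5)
Your proposal is correct and is essentially the paper's own proof. The paper likewise notes that $\Pb_{\operatorname{ic}}(Z,r)=\frac{1}{n^2}\sum_{i=1}^m |O_i|^2$ and then applies Lemma~\ref{lem:partition-square-bounds} with $n_i=|O_i|$ to get both the bounds and the equality cases.
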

\begin{proof} It is clear from the definition that
\[ \Pb_{\operatorname{ic}}(Z,r) = \frac{1}{n^2}\sum_{i=1}^{m} |O_i|^2. \]
The claims now follow by taking $n_i = |O_i|$,\, $i=1,\dots,m$, in Lemma \ref{lem:partition-square-bounds}.
\end{proof}

\begin{cor}
\label{cor:number-components}
Let $(Z,r)$ be a finite solution of size $n$.
\begin{enumerate}[$(a)$]
\item If
$\Pb_{\operatorname{ic}}(Z,r)<1/k$, where $k$ is any natural number, then $(Z,r)$ has at least $k+1$
indecomposable components.
\item If $\Pb_{\operatorname{ic}}(Z,r)>1-\frac{2(n-1)}{n^2}$, then $(Z,r)$ is indecomposable.
\end{enumerate}
\end{cor}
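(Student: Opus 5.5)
Both parts follow from Theorem \ref{thm:component-bounds}, by bounding $\Pb_{\operatorname{ic}}(Z,r)=\frac{1}{n^2}\sum_{i=1}^m|O_i|^2$ from the appropriate side. The number of components is $m$, with $1\le m\le n$.

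For (a), I argue by contraposition. Suppose $(Z,r)$ has at most $k$ components, so $m\le k$. By Cauchy--Schwarz (equivalently, the lower bound in Lemma \ref{lem:partition-square-bounds}, since $(m-s)q^2+s(q+1)^2\ge n^2/m$), we get $\sum_i|O_i|^2\ge n^2/m$. Hence $\Pb_{\operatorname{ic}}(Z,r)\ge 1/m\ge 1/k$. This contradicts the hypothesis $\Pb_{\operatorname{ic}}(Z,r)<1/k$, so there are at least $k+1$ components. The one step needing care is deriving $n^2/m$ from the stated lower bound. I would just use Cauchy--Schwarz directly, $\left(\sum|O_i|\right)^2\le m\sum|O_i|^2$, rather than manipulate the $q,s$ expression.

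For (b), I again argue by contraposition. Suppose $(Z,r)$ is decomposable, i.e. $m\ge 2$. The upper bound of Theorem \ref{thm:component-bounds} gives $\Pb_{\operatorname{ic}}(Z,r)\le g(m)/n^2$, where $g(m)=(m-1)+(n-m+1)^2$. The main point is to show that $g$ is nonincreasing on $2\le m\le n$, so its maximum on this range is at $m=2$. I compute
\[g(m+1)-g(m)=1+(n-m)^2-(n-m+1)^2=-2(n-m),\]
which is $\le 0$ for $m\le n$. Hence $g(m)\le g(2)=1+(n-1)^2=n^2-2(n-1)$. Therefore $\Pb_{\operatorname{ic}}(Z,r)\le 1-\frac{2(n-1)}{n^2}$, contradicting the hypothesis, so $(Z,r)$ is indecomposable.
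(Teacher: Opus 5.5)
Your proposal is correct and follows essentially the same route as the paper: Cauchy--Schwarz to get $\Pb_{\operatorname{ic}}(Z,r)\geq 1/m$ for (a), and the monotonicity in $m$ of the upper bound from Theorem~\ref{thm:component-bounds}, evaluated at $m=2$, for (b). Your explicit computation $g(m+1)-g(m)=-2(n-m)$ supplies the justification for the decreasing claim that the paper only asserts.
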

\begin{proof}
We use the same notation in Theorem~\ref{thm:component-bounds} and its proof.

\smallskip

 For (a), note that by the Cauchy--Schwarz inequality, we have
\begin{align*}
\Pb_{\operatorname{ic}}(Z,r) & = \frac{1}{n^2}\sum_{i=1}^mn_i^2
\geq \frac{1}{n^2}\cdot \frac{1}{m}\cdot \left(\sum_{i=1}^m n_i\right)^2 =\frac{1}{m}.
\end{align*}
Thus, if $\Pb_{\mathrm{ic}}(Z,r)<\frac{1}{k}$, then $m>k$ has to hold, namely, $(Z,r)$ has at least $k+1$ indecomposable components.

\smallskip

For (b), from Theorem \ref{thm:component-bounds}, we have the upper bound
\[ \Pb_{\operatorname{ic}}(Z,r)\leq \frac{(m-1)+(n-m+1)^2}{n^2}.\]
This is a decreasing function in $m$ for $1\leq m\leq n$, so when $m\geq 2$, we have
\[ \Pb_{\operatorname{ic}}(Z,r)\leq \frac{1+(n-1)^2}{n^2} = 1 -\frac{2(n-1)}{n^2}.\]
Thus, if $\Pb_{\mathrm{ic}}(Z,r)>1-\frac{2(n-1)}{n^2}$, then $m=1$ has to hold, namely, $(Z,r)$ is indecomposable.
\end{proof}

Finally, we show that both the lower and upper bounds in Theorem \ref{thm:component-bounds} are sharp, even when $n$ and $m$ are fixed in advance.

\begin{example}
\label{ex:ic-component-bound-sharp}
Let $m\leq n$ be any natural numbers, and write $n=mq+s$ with $0\leq s<m$. Let $Z$ be any finite set of size $n$. For any $\sigma,\tau\in\operatorname{Sym}(Z)$ having cycle types
\[ (\underbrace{q,\dots,q}_{\mbox{\tiny $m-s$ }},\underbrace{q+1,\dots,q+1}_{\mbox{\tiny $s$}}),\quad (\underbrace{1,\dots,1}_{\mbox{\tiny $m-1$}},n-m+1),\]
respectively, it follows from Theorem \ref{thm:component-bounds}(a),(b) that
\begin{align*}
 \Pb_{\operatorname{ic}}(Z,r_\sigma)
        &=\frac{(m-s)q^2+s(q+1)^2}{n^2},\\
\Pb_{\operatorname{ic}}(Z,r_{\tau})&=\frac{(m-1)+(n-m+1)^2}{n^2}.
\end{align*}
Thus, both the lower and upper bounds are sharp.
\end{example}

Example \ref{ex:ic-component-bound-sharp} also implies that $\Pb_{\mathrm{ic}}$ does not admit a uniform gap below~$1$, in the sense that there is no constant $\varepsilon>0$ such that either
\begin{equation}\label{eqn:ic iff} 
\Pb_{\operatorname{ic}}(Z,r)\leq 1-\varepsilon\quad\mbox{or}\quad \Pb_{\operatorname{ic}}(Z,r) =1
\end{equation}
for all finite solutions $(Z,r)$. Indeed, in Example \ref{ex:ic-component-bound-sharp}, take $m = 2$, so then
\[ \Pb_{\mathrm{ic}}(Z,r_{\tau}) = 1 - \frac{2(n-1)}{n^2}.\]
Letting $n\rightarrow\infty$ shows that \eqref{eqn:ic iff} does not hold for any constant $\epsilon >0$. It is worth emphasising that this non-discrete phenomenon below the value 1 is in stark contrast to Theorems A, B, and C.

\section{A probability for almost Yang--Baxter maps}
\label{sec:almost-yang-baxter-maps}

Instead of a solution, we start with an arbitrary bijective map
\[ r : (x,y) \in Z\times Z \longmapsto (\lambda_x(y),\rho_y(x))\in Z\times Z\]
that is {\it non-degenerate}, that is $\lambda_x,\, \rho_y$ are bijective for all $x,y\in Z$. We will call them {\it almost Yang--Baxter maps}, and we want to analyse when they are solutions from a probabilistic point of view. Unlike the probabilities in the previous sections, the condition for which $(Z,r)$ is a solution is $3$-variable.

\begin{definition}
Let $Z$ be any finite set, and let $r:Z\times Z\to Z\times Z$ be an almost Yang--Baxter map. Define
\[
        \Pb_{\operatorname{YB}}(Z,r)=\frac{1}{|Z|^3}|\mathrm{YB}(Z,r)|,        \]
where
\[
       \mathrm{YB}(Z,r)=
        \{(x,y,z)\in Z^3\mid
        r_{12}r_{23}r_{12}(x,y,z)=r_{23}r_{12}r_{23}(x,y,z)\}.
\]
We clearly have
\begin{align*}
(Z,r)\mbox{ is a solution}\iff \Pb_{\operatorname{YB}}(Z,r) = 1 ,
\end{align*}
so this probability measures how far $(Z,r)$ is from being a solution.
\end{definition}


We show that this probability, like $\Pb_{\mathrm{ic}}$, admits no uniform gap below~$1$, in the sense that
there is no constant $\varepsilon>0$ such that either
\begin{equation}\label{eqn:YB iff}
\Pb_{\mathrm{YB}}(Z,r)\leq 1-\epsilon\quad\mbox{or}\quad\Pb_{\mathrm{YB}}(Z,r) =1
\end{equation}
for all finite sets $Z$ and almost Yang--Baxter maps $r: Z\times Z\rightarrow Z\times Z$. In fact, we will show that  $\mathrm{YB}(Z,r)$ can be made to miss exactly $4$ elements of~$Z^3$, while $|Z|$ is allowed to be arbitrarily large.

\smallskip

To that end, let $Z$ be any non-empty set, and let $\pi: Z\rightarrow\mathrm{Sym}(Z)$ be any map. Clearly, we have an almost Yang--Baxter map
\[ r_\pi : (x,y) \in Z\times Z \longmapsto (\pi_x(y),x)\in Z\times Z\]
with $\lambda_x = \pi_x,\, \rho_y=\mathrm{id}_Z$ for all $x,y\in Z$, whose inverse is given by
\[ r_\pi^{-1} : (u,v)\in Z\times Z \longmapsto (v,\pi_v^{-1}(u))\in Z\times Z.\]
We first give a criterion for a triplet $(x,y,z)$ to belong to $\mathrm{YB}(Z,r_\pi)$.

\begin{lem}
\label{lem:left-permutation-yb-formula}
Let $Z$ be any non-empty set, and let $\pi: Z\rightarrow\mathrm{Sym}(Z)$ be any map. For any $x,y,z\in Z$, we have
\[
(x,y,z)\in \mathrm{YB}(Z,r_\pi)\iff \pi_{\pi_x(y)}\pi_x(z)=\pi_x\pi_y(z).
\]
In particular, this implies that
\[ (Z,r_\pi)\mbox{ is a solution}\iff \forall x,y\in Z:\pi_{\pi_x(y)}\pi_x=\pi_x\pi_y.\]
\end{lem}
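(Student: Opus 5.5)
We prove Lemma \ref{lem:left-permutation-yb-formula} by computing both sides of the braid relation on an arbitrary triple $(x,y,z)\in Z^3$, using only the explicit formula $r_\pi(u,v)=(\pi_u(v),u)$.

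\emph{Left-hand side.} Apply $r_{12}$, then $r_{23}$, then $r_{12}$:
\begin{align*}
(x,y,z)&\xmapsto{r_{12}} (\pi_x(y),x,z)\\
&\xmapsto{r_{23}} (\pi_x(y),\pi_x(z),x)\\
&\xmapsto{r_{12}} (\pi_{\pi_x(y)}\pi_x(z),\pi_x(y),x).
\end{align*}

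\emph{Right-hand side.} Apply $r_{23}$, then $r_{12}$, then $r_{23}$:
\begin{align*}
(x,y,z)&\xmapsto{r_{23}} (x,\pi_y(z),y)\\
&\xmapsto{r_{12}} (\pi_x\pi_y(z),x,y)\\
&\xmapsto{r_{23}} (\pi_x\pi_y(z),\pi_x(y),x).
\end{align*}

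The second and third coordinates of the two results agree identically: both are $\pi_x(y)$ and $x$. So equality holds exactly when the first coordinates agree, that is,
\[
\pi_{\pi_x(y)}\pi_x(z)=\pi_x\pi_y(z).
\]
This proves the first claim.

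For the second claim, $(Z,r_\pi)$ is a solution precisely when $\mathrm{YB}(Z,r_\pi)=Z^3$. By the first claim, this means the displayed identity holds for every $z\in Z$, for each fixed pair $(x,y)$. Holding for all $z$ is exactly the equality of maps $\pi_{\pi_x(y)}\pi_x=\pi_x\pi_y$ in $\mathrm{Sym}(Z)$. Quantifying over all $x,y\in Z$ gives the stated characterisation.

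Since $r_\pi$ was already noted to be an almost Yang--Baxter map, bijectivity and non-degeneracy need no further check. The only point requiring care is the bookkeeping of which coordinate each of $r_{12}$ and $r_{23}$ acts on.
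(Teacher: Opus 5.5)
Your proposal is correct and follows the paper's proof exactly: both compute $r_{12}r_{23}r_{12}(x,y,z)=(\pi_{\pi_x(y)}\pi_x(z),\pi_x(y),x)$ and $r_{23}r_{12}r_{23}(x,y,z)=(\pi_x\pi_y(z),\pi_x(y),x)$ and compare first coordinates. Your explicit step-by-step bookkeeping, and your remark that bijectivity and non-degeneracy of $r_\pi$ are already known, just spell out what the paper calls a ``direct computation''.
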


\begin{proof}
Put $r = r_{\pi}$. A direct computation gives
\[
        r_{12}r_{23}r_{12}(x,y,z)
        =
        (\pi_{\pi_x(y)}\pi_x(z),\pi_x(y),x),
\]
while
\[
        r_{23}r_{12}r_{23}(x,y,z)
        =
        (\pi_x\pi_y(z),\pi_x(y),x).
\]
The claims are now clear.
\end{proof}


\begin{theorem}
\label{thm:no-uniform-yb-gap}
Let $n\geq 2$ be any natural number, and let  $Z$ be any finite set of size $n$. For any distinct $u,v\in Z$, there exists a map $\pi : Z\rightarrow \mathrm{Sym}(Z)$ such that
\[ \mathrm{YB}(Z,r_\pi) =  Z^3\setminus \{ (u,u,u),\, (u,u,v),\, (u,v,u),\, (u,v,v) \},\]
so in particular we have
\[
\Pb_{\operatorname{YB}}(Z,r_\pi)=1-\frac{4}{n^3}.
\]
Letting $n\rightarrow\infty$ shows that \eqref{eqn:YB iff} does not hold for any constant $\epsilon >0$.
\end{theorem}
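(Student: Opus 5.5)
The plan is to use Lemma \ref{lem:left-permutation-yb-formula}: it reduces membership in $\mathrm{YB}(Z,r_\pi)$ to the single identity
\[ \pi_{\pi_x(y)}\pi_x(z)=\pi_x\pi_y(z). \]
So it suffices to choose $\pi$ for which this identity fails exactly on the four listed triples. Let $\tau=(u,v)\in\mathrm{Sym}(Z)$ be the transposition of $u$ and $v$, which exists because $u\neq v$ and $n\geq 2$. Define
\[ \pi_u=\tau,\qquad \pi_x=\mathrm{id}_Z \mbox{ for } x\neq u. \]
Then $r_\pi$ is an almost Yang--Baxter map, as noted before the lemma. The idea is that all the non-triviality is concentrated at the single index $u$, so failures can only occur when $x=u$ and the relevant inner indices involve $u$ or $v$.

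I will check the criterion by cases on $x$ and $y$.

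If $x\neq u$, then $\pi_x=\mathrm{id}_Z$. Both sides reduce to $\pi_y(z)$, so every such triple lies in $\mathrm{YB}(Z,r_\pi)$.

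If $x=u$, the condition reads $\pi_{\tau(y)}\tau(z)=\tau\pi_y(z)$, and I split on $y$.
\begin{enumerate}[$\bullet$]
\item For $y\notin\{u,v\}$, we have $\tau(y)=y$ and $\pi_y=\mathrm{id}_Z$. Both sides equal $\tau(z)$, so the identity holds.
\item For $y=u$, we have $\tau(y)=v$, so the left side is $\pi_v\tau(z)=\tau(z)$. The right side is $\tau\tau(z)=z$. The identity therefore fails exactly when $\tau(z)\neq z$, that is, when $z\in\{u,v\}$.
\item For $y=v$, we have $\tau(y)=u$, so the left side is $\tau\tau(z)=z$. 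The right side is $\tau\pi_v(z)=\tau(z)$. Again the identity fails exactly when $z\in\{u,v\}$.
\end{enumerate}

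Collecting the cases, the complement of $\mathrm{YB}(Z,r_\pi)$ is exactly $\{(u,u,u),(u,u,v),(u,v,u),(u,v,v)\}$. Hence $\Pb_{\operatorname{YB}}(Z,r_\pi)=1-4/n^3$. Since $4/n^3\to 0$ as $n\to\infty$ while $\Pb_{\operatorname{YB}}(Z,r_\pi)<1$, no constant $\epsilon>0$ can satisfy \eqref{eqn:YB iff}.

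There is no real obstacle once the construction is fixed. The only delicate point is making sure that $\pi_v=\mathrm{id}_Z$, so that $u$ and $v$ play asymmetric roles; this asymmetry is what produces the failures and restricts them to $x=u$.
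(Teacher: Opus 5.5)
Your proposal is correct and follows the paper's proof essentially verbatim: the same choice $\pi_u=(u,v)$, $\pi_x=\mathrm{id}_Z$ for $x\neq u$, and the same case split via Lemma~\ref{lem:left-permutation-yb-formula} on $x\neq u$ versus $x=u$ with $y\notin\{u,v\}$, $y=u$, $y=v$. The computation in each case matches the paper's, including the observation that failures occur exactly when $z\in\{u,v\}$.
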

\begin{proof} Define $\pi : Z\rightarrow\mathrm{Sym}(Z)$ by setting
\[\pi_x = \begin{cases}
(u,v)&\mbox{if }x = u,\\
\mathrm{id}_Z & \mbox{otherwise}.
\end{cases}\]
By Lemma~\ref{lem:left-permutation-yb-formula}, for any $x,y,z\in Z$, we have
\[ (x,y,z) \in \mathrm{YB}(Z,r_\pi) \iff 
\pi_{\pi_x(y)}\pi_x(z)=\pi_x\pi_y(z).\]
Consider this equality. 
\begin{enumerate}[$\bullet$]
\item If $x\neq u$, then both sides are equal to $\pi_y(z)$. 
\item If $x=u$ and $y\neq u,v$, then both sides are equal to $\pi_x(z)$.
\item If $x=u$ and $y=u$, then the left and right hand sides are
\begin{align*}
\pi_{\pi_u(u)}\pi_u(z) = \pi_v\pi_u(z) = (u,v)(z),\quad
\pi_u\pi_u(z) = z,
\end{align*}
respectively. They are equal exactly when $z \neq u,v$.
\item If $x=u$ and $y=v$, then the left and right hand sides are
\begin{align*}
\pi_{\pi_u(v)}\pi_u(z) = \pi_u\pi_u(z) = z,\quad \pi_u\pi_v(z) = (u,v)(z),
\end{align*}
respectively. Again, they are equal exactly when $z \neq u,v$.
\end{enumerate}
Thus $\operatorname{YB}(Z,r_\pi)$ is as claimed, and this proves the theorem.
\end{proof}

\section*{Acknowledgements}

This work is supported by JSPS KAKENHI 24K16891. Ferrara and Trombetti are members of the non-profit association ‘‘AGTA --- Advances in Group Theory and Applications’’ (www.advgrouptheory.com) and are supported by \hbox{GNSAGA} (INdAM).

\end{document}